\documentclass[11pt,oneside]{amsart}

\usepackage[left=1in,right=1in,top=1in,bottom=1in,marginparwidth=0.8in]{geometry}

\usepackage{microtype}  

\usepackage{mathtools}
\usepackage{bbm}  

\swapnumbers

\usepackage{hyperref}

\usepackage{amssymb,amsfonts,amsmath,amsthm}
\usepackage[mathscr]{eucal}
\usepackage[alphabetic]{amsrefs}
\usepackage{stmaryrd}
\usepackage{colonequals}

\usepackage[ps,matrix,arrow,curve,cmtip]{xy}

\title{Cofreeness of the Lubin-Tate deformation ring}
\author{Charles Rezk}
\date{ \today}
\address{Department of Mathematics \\
University of Illinois Urbana-Champaign \\ 
Urbana, IL}
\email{rezk@illinois.edu}

\numberwithin{equation}{section}

\makeatletter
  \let\c@subsection\c@equation
\makeatother

\theoremstyle{plain}   

\newtheorem{thm}[subsection]{Theorem}
\newtheorem*{thm*}{Theorem}
\newtheorem{prop}[subsection]{Proposition}
\newtheorem*{prop*}{Proposition}
\newtheorem{cor}[subsection]{Corollary}
\newtheorem*{cor*}{Corollary}
\newtheorem{lemma}[subsection]{Lemma}
\newtheorem*{lemma*}{Lemma}

\newtheorem*{claim*}{Claim}

\theoremstyle{remark}
\newtheorem{rem}[subsection]{Remark}    
\newtheorem*{rem*}{Remark}
\newtheorem{exam}[subsection]{Example}
\newtheorem*{exam*}{Example}

\theoremstyle{plain}

\begin{document}


\newcommand{\margnote}[1]{\mbox{}\marginpar{\tiny\hspace{0pt}#1}}

\def\lambada{\lambda}


\newcommand{\id}{\operatorname{id}}
\newcommand{\colim}{\operatorname{colim}}
\newcommand{\llim}{\operatorname{lim}}
\newcommand{\Cok}{\operatorname{Cok}}
\newcommand{\Ker}{\operatorname{Ker}}
\newcommand{\Image}{\operatorname{Im}}
\newcommand{\op}{{\operatorname{op}}}
\newcommand{\Aut}{{\operatorname{Aut}}}
\newcommand{\End}{{\operatorname{End}}}
\newcommand{\Hom}{{\operatorname{Hom}}}

\newcommand*{\ra}{\rightarrow}
\newcommand*{\lra}{\longrightarrow}
\newcommand*{\xra}{\xrightarrow}
\newcommand*{\la}{\leftarrow}
\newcommand*{\lla}{\longleftarrow}
\newcommand*{\xla}{\xleftarrow}

\newcommand{\ho}{\operatorname{ho}}
\newcommand{\hocolim}{\operatorname{hocolim}}
\newcommand{\holim}{\operatorname{holim}}

\newcommand*{\realiz}[1]{\left\lvert#1\right\rvert}
\newcommand*{\len}[1]{\left\lvert#1\right\rvert}
\newcommand*{\Len}[1]{\left\lVert#1\right\rVert}
\newcommand{\set}[2]{{\{\,#1\,\mid\,#2\,\}}}
\newcommand{\bigset}[2]{\left\{\,#1\;\middle|\;#2\,\right\}}
\newcommand*{\tensor}[1]{\underset{#1}{\otimes}}
\newcommand*{\pullback}[1]{\underset{#1}{\times}}
\newcommand*{\powser}[1]{[\![#1]\!]}
\newcommand*{\laurser}[1]{(\!(#1)\!)}
\newcommand{\ndiv}{\not|}
\newcommand{\pairing}[2]{\langle#1,#2\rangle}

\newcommand{\lrtensor}[3]{\,{\mathstrut}^{#1}\!\!\otimes_{#2}\!{\mathstrut}^{#3}}
\newcommand{\ltensor}[2]{\,{\mathstrut}^{#1}\!\!\otimes_{#2}}
\newcommand{\rtensor}[2]{\otimes_{#1}\!\!{\mathstrut}^{#2}}

\newcommand{\F}{\mathbb{F}}
\newcommand{\Z}{\mathbb{Z}}
\newcommand{\N}{\mathbb{N}}
\newcommand{\R}{\mathbb{R}}
\newcommand{\Q}{\mathbb{Q}}
\newcommand{\C}{\mathbb{C}}

\newcommand{\point}{{\operatorname{pt}}}
\newcommand{\Map}{\operatorname{Map}}
\newcommand{\eev}{\wedge}
\newcommand{\sm}{\wedge} 

\newcommand{\bbone}{\mathbbm{1}}

\newcommand*{\mc}{\mathcal}
\newcommand*{\msc}{\mathscr}
\newcommand*{\mf}{\mathfrak}
\newcommand*{\mr}{\mathrm}
\newcommand*{\mb}{\mathbb}
\newcommand*{\ul}{\underline}
\newcommand*{\ol}{\overline}
\newcommand*{\wt}{\widetilde}
\newcommand*{\wh}{\widehat}
\newcommand*{\mtt}{\mathtt}
\newcommand*{\ms}{\mathsf}
\newcommand*{\mbf}{\mathbf}

\newcommand{\dfn}{\textbf}

\def\noloc{\;{:}\,}

\newcommand{\defeq}{\colonequals}

\newcommand{\forcepar}{\mbox{}\par}


\renewcommand{\phi}{\varphi}

\newcommand{\Sym}{\operatorname{Sym}}
\newcommand{\Hi}{\ms{H}_\infty}
\newcommand{\CP}{\mathbb{CP}}

\newcommand{\Spec}{\operatorname{Spec}}
\newcommand{\Spf}{\operatorname{Spf}}

\newcommand{\Norm}{\operatorname{Norm}}

\newcommand{\Perf}{\ms{Perf}}

\newcommand{\Mod}{\operatorname{Mod}}
\newcommand{\Alg}{\operatorname{Alg}}

\newcommand{\Sp}{\ms{Sp}}
\newcommand{\T}{\mathbb{T}}
\newcommand{\Sphere}{\mathbb{S}}

\newcommand{\FG}{\ms{FG}}
\newcommand{\cDef}{\ms{Def}}
\newcommand{\fDef}{\ms{Def}^{\mr{Fr}}}
\newcommand{\tDef}{\ms{Def}^{\mr{tr}}}

\newcommand{\QCoh}{\operatorname{QCoh}}

\newcommand{\Id}{\operatorname{Id}}

\newcommand{\Cat}{\ms{Cat}}
\newcommand{\rgCat}{\ms{Cat}^{\mr{rg}}}

\newcommand{\oDef}{\ol{\ms{Def}}}
\newcommand{\iso}{\mathrm{iso}}
\newcommand{\cart}{\mathrm{cart}}
\newcommand{\isog}{\mathrm{isog}}
\newcommand{\triv}{\mathrm{triv}}

\newcommand{\Set}{\ms{Set}}
\newcommand{\CAlg}{\ms{CAlg}}

\newcommand{\Fr}{\operatorname{Fr}}
\newcommand{\tr}{\mr{tr}}

\newcommand{\univ}{\mathrm{univ}}
\newcommand{\ob}{\operatorname{ob}}
\newcommand{\mor}{\operatorname{mor}}

\renewcommand{\O}{\mc{O}}
\newcommand{\m}{\mathfrak{m}}

\newcommand{\Witt}{\mathbb{W}}
\newcommand{\gh}{\operatorname{gh}}

\newcommand{\cR}{\wh{\mc{R}}}

\begin{abstract}
We give a proof of the cofreeness of the
Lubin-Tate deformation ring, by generalizing earlier results by Matt
Ando and Yifei Zhu about $\Hi$-orientations to the context of power
operations for Morava $E$-theory.
\end{abstract}

\maketitle

\section{Introduction}

The purpose of this note is to give a proof
of what has 
been called the ``cofreeness of the Lubin-Tate ring'', using known
properties of power operations in Morava $E$-theory.

To any 1-dimensional commutative formal groups
$\Gamma/\kappa$ of finite height $h$ over a perfect field $\kappa$ of
characteristic $p$, Lubin and Tate \cite{lubin-tate-formal-moduli}
identified its \emph{universal deformation} $\Gamma^\univ$ as a formal group over a
ring $\O\approx \Witt_p\kappa\powser{u_1,\dots,u_{h-1}}$, the
\dfn{Lubin-Tate ring}.   As Morava observed
\cite{morava-noetherian-loc-cobordism}, this formal group is
associated to a complex-periodic generalized cohomology theory $E$ with
$\pi_0E=\O$, called \dfn{Morava $E$-theory}, which
Goerss-Hopkins-Miller \cite{goerss-hopkins-moduli-spaces} showed is
represented by a commutative ring in spectra.

\subsection{$\Hi$-orientation of Morava $E$-theory}

Recall that the forgetful functor $\CAlg(\Sp)\ra \Sp$ of
$\infty$-categories is \emph{monadic}: the underlying functor of the
monad is the symmetric algebra functor $\Sym(X)\approx  \bigoplus_{m\geq0}
(X^{\otimes m})_{\Sigma_m}$.   This monad descends to a monad on the
homotopy category $h\Sp$, and thus is associated to a 1-category of
algebras $\Hi(\Sp)$, called the category of \dfn{$\Hi$-ring spectra}
(see \cite{bmms-h-infinity-ring-spectra}).
There are evident forgetful functors
\[
  h\CAlg(\Sp) \ra \Hi(\Sp) \rightarrowtail \CAlg(h\Sp),
  \]
but both are far from being equivalences in general.

Nonetheless, one might want to describe all possible maps in
$\Hi(\Sp)$ between two commutative ring spectra.  Recall
that since $E$ is complex orientable, restriction along
$\Sigma^{\infty-2}\CP^\infty\ra MU$ induces a bijection
\[
  \Hom_{\CAlg(h\Sp)}( MU, E) \xra{\sim} \set{f\in
    \wt{E}^2\CP^\infty}{f|_{\CP^1}=1}
\]
between ring maps up-to-homotopy from $MU$ and  coordinates on the
universal deformation $\Gamma^\univ/\O$ (more precisely, to
coordinates up to rescaling by $\O^\times$).
\begin{thm}[Ando \cite{ando-isogenies-and-power-ops}; Zhu
  \cite{zhu-norm-coherence}] \label{thm:h-infty-mu}
  Restriction along $\Sigma^{\infty-2}\CP^\infty\ra MU$ and projection
  along $\O\ra \kappa$ induce a bijection
  \[
    \Hom_{\Hi(\Sp)}(MU, E) \xra{\sim} \set{\ol{f}\in  \kappa\otimes_\O
      \wt{E}^2\CP^\infty}{\ol{f}|_{\CP^1}=1}. 
  \]
  That is, every  coordinate of the formal group $\Gamma/\kappa$ lifts
  uniquely to a coordinate on $\Gamma^\univ/\O$, which is furthermore realized by
  a unique $\Hi$-map $MU\ra E$.
\end{thm}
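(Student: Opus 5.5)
The plan is to reduce $\Hi$-maps $MU\ra E$ to a condition on coordinates, and then to show that condition has exactly one solution over each coordinate of $\Gamma/\kappa$.

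\textbf{Step 1 (Ando's criterion).} First I would recall the criterion of Ando, as sharpened by Zhu: a ring map $MU\ra E$ corresponding to a coordinate $f$ on $\Gamma^\univ$ is an $\Hi$-map if and only if $f$ is \emph{norm coherent}. This means that for every finite subgroup $G\subset\Gamma^\univ$ defined over the ring $\O_G$ classifying such subgroups (Strickland's $E^0B\Sigma_{p^k}/I$), with deformation $\psi_G\colon \O\ra\O_G$ and isogeny $q_G\colon \Gamma^\univ\otimes\O_G\ra\psi_G^*\Gamma^\univ$, we have
\[
  \psi_G^* f\circ q_G \;=\; \prod_{a\in G} f(x -_{\Gamma} a).
\]
The argument goes through the Thom isomorphism. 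The total power operation $P_m\colon E^0(X)\ra E^0(X\times B\Sigma_m)$ applied to the Thom class of the universal line bundle gives the Thom class of its $m$-th external power. The $\Hi$ condition on $MU$ is determined on $\CP^\infty$ because $MU$ is built from Thom spectra of line bundles via the splitting principle. Modulo transfers, it reduces to the displayed identity for subgroups of order $p^k$. I would take this equivalence as known input from \cite{ando-isogenies-and-power-ops} and \cite{zhu-norm-coherence}, recasting it in terms of the power-operation structure on $E$.

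\textbf{Step 2 (the equation as a fixed point).} Next I would view norm coherence as an equation for the coordinate $f$ given its reduction $\ol f$. It suffices to impose it for subgroups of order $p$, i.e.\ the degree-$p$ isogenies, because compositions of isogenies generate the rest. The Frobenius congruence then applies: modulo $\m$, the subgroup $G=\ker(\mathrm{Frob})$ gives $\ol{\psi_G^*f}\circ \mathrm{Frob} = \ol f(x)^p$. This holds automatically for any $\ol f$, so the equation is consistent over $\kappa$.

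\textbf{Step 3 (unique lifting).} This is the main obstacle. I would solve for $f$ by successive approximation in the $\m$-adic (or $(p,u_1,\dots,u_{h-1})$-adic) topology. Suppose $f_n$ satisfies the equation modulo $\m^n$ and lifts $\ol f$. Any two lifts differ by a unit $1+\epsilon$ with $\epsilon\equiv 0 \bmod \m^n$. The norm map $f\mapsto \prod_a f(x-a)$ multiplies $\epsilon$ by $p$ to leading order, or more precisely sends it into $\m^{n+1}$ after accounting for the structure of $\O_G$ over $\O$. The obstruction therefore lies in a linear equation whose operator is ``$\psi^*$ minus something contracting''. The key input is that $\psi\colon \O\ra\O_G$ behaves like a Frobenius lift, and is in fact topologically nilpotent-contracting on $\m$. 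So the map sending a correction to its error is a contraction, which gives a unique solution by a fixed point argument.

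Concretely, I would first try working with the Lubin--Tate ring's Frobenius congruence: $\psi_G(u_i)\equiv u_i^p \bmod p$ for $G$ the kernel of Frobenius. With this, each step of the induction involves a map $\m^n/\m^{n+1}\ra \m^n/\m^{n+1}$ of the form $\epsilon\mapsto \epsilon^{(p)}$-ish minus an invertible term. That map is bijective, which gives existence and uniqueness at each stage, and passing to the limit gives the unique lift $f$. Finally, Step 1 identifies lifts satisfying norm coherence with $\Hi$-maps, which gives the bijection. The normalization $f|_{\CP^1}=1$ is preserved since it is fixed modulo units in $\O^\times$ that are determined by $\ol f$.
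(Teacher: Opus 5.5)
Your Step 3 is where the proof has to happen. You rightly call it the main obstacle, but you do not resolve it, and the mechanism you sketch cannot work as stated. After your reduction to subgroups of order $p$, the norm condition is an identity in $\O_1\powser{x}$ relating $f$ to $t^*f$. Here $t\colon\O\to\O_1$ is the target map, and $\O_1$ is free of rank $(p^h-1)/(p-1)$ over $\O$ via $s$. Since $t$ is not an endomorphism of $\O$, this equation defines no self-map of the set of lifts of $\ol f$, so there is no operator that could be a contraction. Nor does $t$ respect the filtration you induct on. For $h\geq 2$ one has $t(\m)\not\subseteq \m\O_1$: otherwise the universal order-$p$ isogeny over $\O_1/\m\O_1$ would join two trivial deformations, forcing $\m_1=\m\O_1$ by \S\ref{subsec:trivial-deformations}, which contradicts $\operatorname{rank}_\O\O_1>1$. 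So a correction $\epsilon\in\m^n$ does not pull back into $\m^n\O_1$, and the map ``$\m^n/\m^{n+1}\to\m^n/\m^{n+1}$'' you invoke is not defined. Finally, the kernel of Frobenius is not a subgroup of $\Gamma^\univ$ over $\O$. The congruence $t(u_i)\equiv u_i^p$ holds only after applying $\nu_1\colon\O_1\to\O/p$, and, as your own Step 2 observes, the condition there is satisfied by \emph{every} lift, so it cannot single one out.

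The missing idea (Ando's, which the paper recasts) is to use the $p$th power isogeny $[p]$ instead. Its kernel $\Gamma^\univ[p]$, of rank $p^h$, is defined over $\O$ itself (classified by $q\colon\O_h\to\O$), and its target is again a deformation over $\O$, namely $\Sigma$ of the source. This produces an \emph{automorphism} $\psi_\O=qt_h$ of $\O$ lifting $p^h$-power Frobenius, and an Adams operation $\psi_R$ on the ring $R$ corepresenting coordinates. Compatibility with norms along $[p]$ then becomes the fixed-point equation $\mu=\psi_\O^{-1}\mu\psi_R$. Conjugation by $\psi$ is an $\m$-adic contraction \eqref{prop:frobenius-lifting}, which gives existence and uniqueness of a $[p]$-compatible lift. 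A second, separate argument is then needed to show this lift is compatible with \emph{all} isogenies, and your reduction to order-$p$ subgroups does not provide it. In \eqref{thm:lifting-in-qcoh} this is done as follows. The maps $(\id\otimes\mu)\nabla_{R,d}$ and $\nabla_{S,d}\mu$ both intertwine $\psi_R$ with $\psi_{\O_d}\otimes\psi_S$, because $[p]$ commutes with every isogeny. Both reduce to $x\mapsto\mu(x)^{p^d}$ modulo the ideal generated by $\m_d$, by the Frobenius congruence. So they coincide by uniqueness of Frobenius lifting on the complete perfect ring $\O_d\otimes_\O S$. The statement itself then follows from this cofreeness applied to the $\T$-algebra $E^\vee_0MU$, via $\Hom_{\Hi(\Sp)}(MU,E)\cong\Hom_{\Alg_\T}(E^\vee_0MU,\O)$. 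The paper therefore never needs the explicit norm-coherence criterion of your Step 1.
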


In addition to this, these results come with a precise
characterization of the coordinates on 
$\Gamma^\univ/\O$ which correspond to $\Hi$-complex orientations:
these are the \dfn{Ando coordinates}, or in
the terminology of \cite{zhu-norm-coherence}  the \dfn{norm
  coherent coordinates}.

There are analogous results classifying $\Hi$-orientations of $E$ by
the bordism spectra $MSU$ and $MU\langle 6\rangle$
\cite{ando-hopkins-strickland-h-infinity}.

\subsection{Power operations in Morava $E$-theory and $\Hi$-maps}

Ando's original proof of \eqref{thm:h-infty-mu} is phrased in terms of
\emph{power 
  operations}.  
Subsequent work (including
\cite{strickland-finite-subgroups-of-formal-groups},
\cite{strickland-morava-e-theory-of-symmetric}, \cite{ando-hopkins-strickland-h-infinity},
\cite{rezk-congruence-condition}) have advanced our understanding of
power operations for Morava $E$-theory.

What we need can be summarized as follows.
\begin{enumerate}
\item There is a monad $\T$ on the 1-category $\Mod_{\O}$ of modules
  over the Lubin-Tate ring, which approximates $\pi_0\Sym$ in
  $\Mod_{E}(\Sp_{K(h)})$, in the sense that there is a natural map
  \[
    \T(\pi_0M) \ra \pi_0\Sym(M), \qquad M\in \Mod_E(\Sp_{K(h)}), 
  \]
  with the property that if  if $M$ is a
  $K(h)$-localization of a free $E$-module (that is, of a direct sum
  $\bigoplus E$), then
  $\T(\pi_0M)^{\eev}_{\m} \xra{\sim} \pi_0\Sym(M)$ (completion at the
  maximal ideal $\m\subset \O$).

\item Let $\Alg_\T$ denote the category of algebras for the monad
  $\T$.   Then $\pi_0\colon \CAlg_E(\Sp_{K(h)})\ra \CAlg_{\O}$ lifts
  canonically to a functor
  \[
    \pi_0\colon \CAlg_E(\Sp_{K(h)})\ra \Alg_{\T}.
  \]
  Furthermore, this functor evidently factors
  \[
    \CAlg_E(\Sp_{K(h)})\ra \Hi(\Mod_E(\Sp_{K(h)}))\ra \Alg_\T,
  \]
  through the 1-category of $\Hi$-rings in $K(h)$-local $E$-modules.

\item An immediate consequence of (1) and (2) is that, if $A\in
  \Hi(\Mod_E(\Sp_{K(h)}))$ is 
  such that its underlying $E$-module is the $K(h)$-localization of a
  free module, then
  \[
    \Hom_{\Hi(\Mod_E(\Sp_{K(h)}))} (A,B) \xra{\sim}
    \Hom_{\Alg_\T}(\pi_0 A,\pi_0B). 
  \]

  \item
In particular, if $A\in \Hi(\Sp)$ such that its $K(h)$-localized
$E$-homology $E^\vee_*A$ is concentrated in even degree and is such
that $E^\vee_0A$ is the $\m$-adic completion of a free $\O$-module, we
have that
\[
  \Hom_{\Hi(\Sp)}(A,E) \xra{\sim} \Hom_{\Alg_\T}(E^\vee_0A, \O).  
  \]
\end{enumerate}

\subsection{Cofreeness and its proofs}

Thus \eqref{thm:h-infty-mu} is a consequence of the following, which
can be thought of as a purely algebraic manifestion of the same idea.

\begin{thm}[Cofreeness of $\O$]\label{thm:cofreeness-of-O}
  For any object $R\in \Alg_\T$, the map
  \[
    \Hom_{\Alg_\T}(R,\O) \ra \Hom_{\CAlg_\kappa}(R/\m R,\kappa),
  \]
  obtained from the forgetful functor $\Alg_\T\ra \CAlg_\O$ and
  reducing along $\m\subset \O$, is a bijection.
\end{thm}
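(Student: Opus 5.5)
The plan is to identify $\O$, as a $\T$-algebra, with a right adjoint evaluated on $\kappa$, and then to compute that right adjoint through the deformation-theoretic description of $\T$.

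\textbf{Step 1: the right adjoint.} The functor $R\mapsto R/\m R$ from $\Alg_\T$ to $\CAlg_\kappa$ has a right adjoint $C$. Its existence should follow from an adjoint functor theorem, since $\Alg_\T$ is locally presentable and reduction preserves colimits. The theorem then says exactly that the unit map $\O\to C(\kappa)$ is an isomorphism of $\T$-algebras, or equivalently that $\O$ represents $R\mapsto \Hom_{\CAlg_\kappa}(R/\m R,\kappa)$. I will not construct $C$ abstractly. Instead I will verify the bijection directly.

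\textbf{Step 2: translating $\T$-algebra maps into deformation data.} I will use the known description of $\T$-algebras, via Strickland's theorem and \cite{rezk-congruence-condition}. Such an algebra is an $\O$-algebra $R$ with operations $\psi_H\colon R\to \O_{H}\otimes R$ indexed by finite subgroups $H$ of $\Gamma^\univ$, where $\O_H$ is the ring classifying subgroups of rank $p^k$. These operations satisfy transitivity conditions and a Frobenius congruence. A $\T$-algebra map $R\to\O$ is then an $\O$-algebra map $g$ commuting with all $\psi$. For the target $\O$, the $\T$-structure is tautological: a point of $\Spf\O_H$ is a deformation $G$ together with a subgroup $H$, and $\psi_H$ classifies the quotient deformation $G/H$ with its induced identification over the Frobenius twist.

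Since $\O$ is $\m$-adically complete, it suffices to construct compatible maps $R\to \O/\m^n$ and to show uniqueness at each stage. I will build them by induction on $n$, starting from $\bar g\colon R/\m R\to\kappa$. The guiding idea is the Frobenius congruence. The operation $\psi_{p}$ attached to $\Gamma[p]$ (the total power operation for degree-$p$ subgroups) agrees modulo $p$ with the Frobenius composed with the structure map. So if a lift $g_n\colon R\to\O/\m^n$ is compatible with the $\psi$'s, then iterating $\psi$ expresses $g_{n+1}$ in terms of $g_n$ composed with Frobenius-type maps. This is the mechanism by which ``Witt-vector'' cofree objects are rigid. Concretely, I will try to show that the value of $g$ on any $r\in R$ is determined by the values $\bar g(\psi_{H}(r))$ over $\kappa$ together with an injectivity statement. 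The statement needed is that the combined map
\[
\O\longrightarrow \prod_k \O_{\Gamma^\univ[p^k]}\text{-data}
\]
is injective when reduced appropriately. In other words, a deformation is determined by the collection of its quotients by subgroups, since $\O_{H}$ is finite flat over $\O$ and the subgroup schemes separate points after completion.

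\textbf{Step 3: the main obstacle.} The hard part is existence. Given only $\bar g$, I must produce an actual ring map $R\to\O$ compatible with all operations. For this I will follow the Ando/Zhu strategy: reduce to a universal or free case and use the Lubin--Tate/Serre--Tate rigidity fact that the \emph{Frobenius isogeny} canonically lifts. Over $\kappa$, the data of all quotients $\Gamma/\Gamma[\Fr^k]$ is Frobenius twisting, which is determined by $\bar g$. Deformations of Frobenius isogenies are unobstructed and unique, so the quotient deformations over $\O_H$ glue. The technical heart will be to show that the putative $g$ defined by ``$g(r)$ equals the unique element whose image under each $\psi_{\Gamma^\univ[p^k]}$ reduces to $\bar g\circ(\text{Frobenius twist})$'' is well defined and multiplicative. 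I expect to use the nilpotence of $\m$ in $\O/\m^n$ together with the fact that $\Fr^k$ kills $\m$-adic ambiguity as $k\to\infty$, which gives a contraction argument.

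Uniqueness and the ring-map property should then follow formally from the monad structure, since all the operations are ring maps.
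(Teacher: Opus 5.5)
\textbf{Gap 1: the mechanism for pinning down $g$.} The overall shape is right: reduce to the free case, use the Frobenius congruence, finish with a contraction. But the mechanism you propose for determining $g$ does not work, and the two steps that carry the paper's proof are missing.

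You want $g(r)$ to be determined by reductions of the operations applied to $r$, via an injectivity statement. You also want to define it as the unique element whose images under the operations attached to $\Gamma^{\univ}[p^k]$ reduce to $\bar g$ composed with Frobenius twists. Such reductions only see $y\bmod\m$, or $y\bmod p$, of an element $y\in\O$:
\begin{itemize}
\item On $\O$, the operation attached to $\Gamma[p^k]$ is the automorphism $\psi_\O^k$, and $\psi_\O^k(y)\equiv y^{p^{hk}}\bmod\m$.
\item At height $1$, $s_d$ identifies $\O_d$ with $\O=\Witt_p\kappa$ and $t_d$ lifts $\phi^d$. So every reduction of every operation applied to $y$ depends only on $y\bmod p$.
\end{itemize}
So no separation statement of the kind you need is available; the higher-order information has to come from a limit. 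Two smaller points: $\Gamma[p]$ has rank $p^h$, not $p$. And for $h\geq 2$ the maps $t_d$ need not carry $\m$ into $s_d(\m)\O_d$, so the structure maps do not descend to $\O/\m^n$, and a stagewise induction ``compatible with all $\psi_H$'' is not well posed.

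The missing idea is to use the single operation that is an \emph{endomorphism}: the Adams operation $\psi_R=(q\otimes\id)\nabla_{R,h}\colon R\to R$ coming from $[p]\colon\Id\to\Sigma$. One must prove three things:
\begin{itemize}
\item $\psi_R(x)\equiv x^{p^h}\bmod\m R$, using the Frobenius congruence and the fact that $[p]$ is a height $h$ Frobenius on trivial deformations;
\item $\nabla_{R,d}\psi_R=(\psi_{\O_d}\otimes\psi_R)\nabla_{R,d}$;
\item $\psi_\O$ and the $\psi_{\O_d}$ are automorphisms preserving the maximal ideals.
\end{itemize}
Then take $R$ polynomial, so that $\bar g$ lifts to some $\O$-algebra map $\mu$. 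The limit $\lim_r\psi_\O^{-r}\mu\psi_R^{r}$ exists and is the unique $\psi$-equivariant lift of $\bar g$. The reason is that conjugation by $\psi$ upgrades agreement modulo $\m^r$ to agreement modulo $\m^{r+1}$. The difference of two $\O$-algebra maps is $\O$-linear, so it sends $\m R$ into $\m^{r+1}$, and the binomial cross terms carry a factor $p\in\m$.

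\textbf{Gap 2: compatibility with all operations.} This does not ``follow formally from the monad structure.'' A $\psi$-equivariant $\O$-algebra map is not a priori compatible with every $\nabla_d$, and proving this is the other half of the argument. Compare $\alpha=(\id\otimes\wh\mu)\nabla_{R,d}$ and $\beta=t_d\wh\mu$ as maps $R\to\O_d$:
\begin{itemize}
\item both are $\O$-algebra maps, with $\O_d$ an $\O$-algebra via $t_d$, intertwining $\psi_R$ with $\psi_{\O_d}$;
\item both reduce modulo $\m_d$ to $x\mapsto\wh\mu(x)^{p^d}$ in $\O_d/\m_d=\kappa$, by the Frobenius congruence and $\nu_dt_d=\phi^d$;
\item $(\O_d,\m_d,\psi_{\O_d})$ is complete and perfect.
\end{itemize}
So the uniqueness half of the same contraction argument forces $\alpha=\beta$.

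Finally, in Step 2 you identify $\T$-algebra maps $R\to\O$ with $\O$-algebra maps compatible with the operations. This holds only because $\O$ is $p$-torsion free: maps in $\Alg_\T$ and maps of underlying quasicoherent sheaves agree for $p$-torsion-free targets. It should be invoked explicitly, since that is where the extra $\T$-structure is disposed of.
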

This was proved by Burklund, Schlank, and Yuan
\cite{burklund-schlank-yuan-chromatic-nullstellensatz}*{Thm.\ 3.4}, as
a component of their proof of the ``chromatic Nullstellensatz
theorem''.  
Their formulation of cofreeness is slightly more general than this, using the observation
\cite{lurie-ell2}*{\S5} that the construction of Morava $E$-theory for
$\Gamma/\kappa$ can be extended to functor
\[
  \ms{E}\colon  \Perf_\kappa \ra \CAlg(\Sp_{K(h)})
\]
from the category of perfect $\kappa$-algebras, so that
$\ms{E}(\kappa)=E$, and so that $\O(A)\defeq \pi_0\ms{E}(A)$ is
equipped with a surjective homomorphism $\O(A)\ra A$.
\begin{thm}[Cofreeness of $\O(A)$]\label{thm:cofreeness-of-OA}
For any object $R\in \Alg_\T$, and any $A\in Perf_\kappa$, 
\[
  \Hom_{\Alg_\T}(R, \O(A)) \xra{\sim} \Hom_{\CAlg_\kappa}(R/\m R, A)
\]
obtained from the forgetful functor $\Alg_\T\ra \CAlg_\O$ and
projecting along $\O(A)\ra A$  is a bijection. 
\end{thm}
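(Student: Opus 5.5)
Given a $\kappa$-algebra map $\ol{g}\colon R/\m R\ra A$, I will construct the unique $\T$-algebra lift $g\colon R\ra \O(A)$ by exploiting the structure of $\T$-algebras as rings with a Frobenius-lifting family of operations. The governing idea is that $\T$ carries, for each finite subgroup of order $p^k$ of the universal deformation, operations $\psi_H$. These assemble into total power operations $\psi^{k}\colon R\ra R_k\defeq R\otimes_\O \O_k$. Here $\O_k$ is Strickland's ring classifying subgroups of order $p^k$ of $\Gamma^\univ$, and $R_k$ is the ring receiving the total power operation. The congruence condition \cite{rezk-congruence-condition} says $\psi^{1}$ reduces to the relative Frobenius modulo $p$. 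For the target $\O(A)$, these operations are given geometrically by quotient deformations: a $\T$-algebra map $R\ra \O(A)$ is the same as a ring map compatible with the maps $\O(A)\ra \O(A)\otimes_\O\O_k$ classifying $\Gamma_A/H$ for the universal subgroup $H$.

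The plan is as follows. First I reduce to the case where $R$ is a free $\T$-algebra, or more precisely a reflexive coequalizer of free $\T$-algebras. Both sides of the map in Theorem~\ref{thm:cofreeness-of-OA} turn coequalizers in $\Alg_\T$ into equalizers. On the right this is because $R\mapsto R/\m R$ and $\Hom_{\CAlg_\kappa}(-,A)$ both preserve these colimits, since $\T$ preserves reflexive coequalizers. So it suffices to treat $R=\T(\O^{\oplus S})$, i.e.\ to treat a single generator $R=\T(\O)$. In that case the left side is $\O(A)$ itself, and the right side is $\Hom_{\CAlg_\kappa}(\T(\O)/\m,A)$.

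Next I use the known structure of $\T(\O)/\m$. By the Frobenius congruence, and since $\T(\O)$ is generated as an algebra by operations indexed by subgroup data, modulo $\m$ the total operations of degree $p^k$ become iterated Frobenius. This describes $\T(\O)/\m$ as a colimit built from $\kappa[x]$ and $\kappa[x^{1/p^\infty}]$-type pieces. Consequently a $\kappa$-algebra map $\T(\O)/\m\ra A$ should amount to compatible $p^k$-th roots in $A$, together with the value of the generator. Since $A$ is perfect, the comparison with $\O(A)\ra A$ becomes a Witt-vector-style statement. For $\O(A)=\Witt(A)\otimes_{\Witt\kappa}\O$, suitably completed, this is the universal property of Witt vectors of a perfect ring as the cofree $\delta$-ring: $\Hom_\delta(\Z[x]^{\delta}, \Witt(A))=\Witt(A)\cong \varprojlim_{x\mapsto x^p}A$.

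The main obstacle will be making this reduction precise at height $h>1$. There $\T$ is not simply a $\delta$-structure: the operations are indexed by the whole tower of subgroup schemes, and $\O_k$ is not étale over $\O$. The first approach I would try avoids computing $\T(\O)/\m$ explicitly. I would construct $g$ as the limit of the compositions
\[
  R\xra{\psi^{k}} R\otimes_\O\O_k \xra{\ol g} A\otimes_{\kappa}(\O_k/\m),
\]
and then pass to the inverse limit over $k$ using the canonical subgroup $\Gamma[\Fr^k]$. This uses that the Frobenius isogeny on a perfect ring lifts uniquely to a deformation over $\O(A)$, which is essentially Lurie's construction of $\ms{E}(A)$. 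Uniqueness should follow because a $\T$-map into $\O(A)$ is determined modulo $\m^{n}$ by its reduction, via the congruence $\psi^{\Fr}\equiv \mathrm{Frob}$ and an induction on $n$ using $p$-torsion-freeness and perfectness of $A$.
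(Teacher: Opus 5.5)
Your reduction to free $\T$-algebras matches the paper's. The core step, however, has a genuine gap: you never produce an integral lift of Frobenius on the target, and the operation you propose to use instead carries no information.

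The canonical-subgroup (Frobenius) morphism is classified by $\nu_k\colon \O_k\to\O/p$, so it exists only modulo $p$. The operation it induces is $(\nu_k\otimes\id)\nabla_{R,k}\colon R\to R/p$, which by the Frobenius congruence is just $x\mapsto x^{p^k}$. Every ring homomorphism commutes with that, so it cannot single out a lift. Your uniqueness induction modulo $\m^n$ therefore has nothing to work with once $h\geq 2$. It works at $h=1$ only because $G[p]$ is then the unique subgroup of order $p$, so $\nabla_{R,1}$ is itself an integral Frobenius lift. Likewise, the composites $R\to A\otimes_\kappa(\O_k/\m\O_k)$ all land in $\F_p$-algebras, and no inverse limit over $k$ of them gives a map into the $p$-torsion-free ring $\O(A)$.

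The missing idea is the Adams operation attached to the $[p]$-isogeny. Its kernel is $G[p]$ (height $h$), it is defined over all of $\cDef$ and classified by $q\colon\O_h\to\O$, and $\psi_R=(q\otimes\id)\nabla_{R,h}$ is an honest ring endomorphism with $\psi_R(x)\equiv x^{p^h}$ modulo $\m R$, not merely modulo $p$. The paper then proceeds in four steps:
\begin{enumerate}
\item Using $\O(A)\cong\Witt_p(A)\powser{u_1,\dots,u_{h-1}}$ and spherical Witt vectors, it identifies $\psi_{\O(A)}$ with the $p$-completion of $\wt\phi^h\otimes\psi_\O$. Hence $\psi_{\O(A)}$ is an automorphism preserving $\m\O(A)$.
\item It builds the unique $\psi$-equivariant $\O$-algebra map $\hat\mu=\lim_r\psi_S^{-r}\mu\,\psi_R^{r}$ congruent to a chosen lift $\mu$. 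This is a contraction argument modulo powers of $\m S$, and it needs $\psi_S$ to be invertible.
\item It shows that any such $\psi$-equivariant map automatically satisfies $(\id\otimes\hat\mu)\nabla_{R,d}=\nabla_{S,d}\hat\mu$ for all $d$. This applies the uniqueness half of Frobenius lifting in $\O_d\otimes_\O S$ with $\psi_{\O_d}\otimes\psi_S$.
\item It uses $p$-torsion freeness to identify maps into $\O(A)$ in $\QCoh(\cDef)$ with maps in $\Alg_\T$.
\end{enumerate}
Nothing in your proposal substitutes for steps (1)--(3).

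Your description of the right-hand side is also incorrect. Free $\T$-algebras are polynomial over $\O$ on infinitely many generators (Strickland). So $\T(\O)/\m$ is a polynomial $\kappa$-algebra, and $\Hom_{\CAlg_\kappa}(\T(\O)/\m,A)$ is a product of copies of $A$; already at height $1$, $\T(\O)/\m=\kappa[x,\theta x,\theta^2x,\dots]$. It is not a set of compatible $p^k$-th roots: in a perfect ring those are unique and would give back only $A$, far smaller than $\O(A)$.

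The isomorphism $\Witt_p(A)\cong\varprojlim_{x\mapsto x^p}A$ is false; for $A=\F_p$ the two sides are $\Z_p$ and $\F_p$. The height-one fact you want is cofreeness, $\Hom_\delta(R,\Witt_p(A))\cong\Hom(R,A)$.

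Note that the paper never computes $\T(\O)/\m$. It only uses that a free $\T$-algebra is polynomial over $\O$, so any $\O$-algebra map $R\to S/\m S$ lifts to some $\O$-algebra map $\mu\colon R\to S$, which step (2) then corrects.
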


The proof of these in
\cite{burklund-schlank-yuan-chromatic-nullstellensatz} relies on
previous work on power operations and the category of $\Alg_\T$.
It also makes use of some delicate calculations of power
operations due to Hahn \cite{hahn-thesis}, as well a transchromatic
induction argument.

As it happened, 
I had also found a proof of \eqref{thm:cofreeness-of-O}  a couple of
years prior, and had  
announced the result at a conference.\footnote{At the midwest topology seminar at
  University of Chicago in October 2019.}  However, I never got around
to writing up the proof then.
The purpose of this note is
to make that  proof available.

The proof given here is very
different than the one by Burklund, Schlank, and Yuan.  It may be
summarized as follows: \emph{The 
proof by Ando and Zhu of
\eqref{thm:cofreeness-of-O} can be essentially entirely rephrased 
in the setting of the algebra of  power operations}.  That is, not
merely the statement of \eqref{thm:cofreeness-of-O}, but also its
proof, can be given entirely using properties of the category
$\Alg_\T$, and by following the outline first given by Ando.

We note that Akhil Mathew has also given a proof of cofreeness.  His
proof has some correspondence to the one given here, but it also
makes essential use of the theory of animated $p$-derivations, \textit{a la}
\cite{bhatt-lurie-prismatization}*{App.\ A}.
Yet another proof of cofreeness is given in
\cite{rezk-witt-filtration}.  

\subsection{Idea of the proof}

In the original paper by Ando and its generalization by Zhu (and so also in
this note), an important role is
played by the connection between \emph{additive} power operations and
\emph{isogenies} 
between deformations of $\Gamma/\kappa$.  

The forgetful functor from $\T$-algebras to
$\O$-algebras factors as
\[
  \Alg_\T \ra \QCoh^{\Fr}(\cDef)\subset \QCoh(\cDef) \ra \Alg_\O.
\]
The category $\QCoh(\cDef)$ encodes information about \emph{additive} power
operations.  The forgetful functor lands in a full subcategory of
objects which satisfy the \emph{Frobenius congruence}.  This
congruence can be characterized as saying that a certain additive
power operation $Q$ satisfies an identity of the form
\[
  Q(x) = x^p+ p\theta(x),
\]
where $\theta$ is a \emph{non-additive $p$-derivation}.
(We'll use a different but
equivalent  formulation of the Frobenius congruence in this paper, see
\S\ref{subsec:frobenius-congruence}.)  
Furthermore, we have that
$\Hom_{\Alg_\T}(R,S)\xra{\sim}\Hom_{\QCoh(\cDef)}(R,S)$ whenever the
underlying $\O$-algebra of $S$ is $p$-torsion free.  

The category $\QCoh(\cDef)$ can be described purely in terms of the
formal group $\Gamma/\kappa$: it is here called the category of
\emph{quasicoherent sheaves of rings} on the stack $\cDef$ of
deformations and isogenies between them.  See
\S\ref{sec:quasicoherent-sheaves}. 

For the proof of cofreeness, the most relevant piece of structure is
the \emph{$p$th-power Adams operation}.  In particular, we will show
(\S\ref{subsec:adams-operation}) that to each object $\ms{R}$ in
$\QCoh^{\Fr}(\cDef)$, its underlying $\O$-algebra 
$R$ carries a
natural ring-endomorphism
$  \psi_{\ms{R}}\colon R\ra R$ which is a lift of $p^h$-power Frobenius:
\[
  \psi_{\ms{R}}(x) \equiv x^{p^h} \mod \m R.
\]
In particular, this defines a natural operation on $\pi_0$ of
$K(h)$-local commutative $E$-algebras, which generalizes the classical
$p$th power
Adams operation on $p$-completed complex $K$-theory.

The structure of the proof is as follows.  First, we can reduce to the case when the
underlying $\O$-algebra of $R\in \Alg_\T$ is a polynomial ring, as
this is so for the universal example (the free $\T$-algebra on one
generator).  As $\O$ is $p$-torsion free, we can work entirely 
in the category  $\QCoh^{\Fr}(\cDef)$.
Given this, the argument has two steps.
\begin{enumerate}
\item We show that any $\O$-algebra
  homomorphism $\mu\colon R\ra \O$ is approximated \emph{uniquely}
  modulo $\m\subset \O$
  by one which  
  commutes with Adams operations, i.e., such that $\mu \psi_{R}=
  \psi_{\O}\mu$.
\item We show that any $\O$-algebra map $\mu\colon R\ra \O$ which
  commutes with Adams operations is in fact a morphism in
  $\QCoh(\cDef)$.
\end{enumerate}

\subsection{Comparison to the $\Hi$-argument}

This two-step argument is precisely parallel to those given by Ando
and Zhu.  In their case, $R=E_0^{\eev}MU$, which is an $\m$-adic
completion of a polynomial ring.    This object corepresents
the functor $F$ which sends an $\O$-algebra $A$ to the set of
\emph{coordinates (up to units)} on the basechange
$\Gamma^{\mr{univ}}_A$ of the universal deformation to $A$.  Power  
operations in $R$  encode of such
coordinates with respect to isogenies of formal groups: an 
isogeny $f\colon G\ra G'$ sends a coordinate $x$ on $G$ to its norm
$y\defeq \Norm_f(x)$ with respect to $f$.  See
\cite{zhu-norm-coherence} for details.

A $\T$-algebra map $R\ra \O$ (and thus an $\Hi$-map $MU\ra E$)
corresponds to a \emph{norm-coherent} coordinate on
$\Gamma^{\mr{univ}}/\O$, i.e., a natural choice of coordinate $x_G$ for each
deformation $G$ such that
\[
  x_{G'} = \Norm_f(x_G) \quad \text{for every isogeny $f\colon G\ra G'$.}
\]
Ando and Zhu lift a coordinate $\ol{x}$ on $\Gamma/\kappa$ to a
norm-coherent coordinate $x$ on the universal deformation
$\Gamma^{\mr{univ}}/\O$ in two steps: (i) produce a lift $x$ which is
unique with the property of being compatible with norms along the
$p$th power isogeny, then (ii) show 
that any such lift $x$  is compatible with norms along arbitrary isogenies.

\subsection{Structure of this paper}

In \S\ref{sec:frobenius-lifting} we recall standard ideas about
approximating ring homomorphisms by ones which commute with given
lifts of Frobenius.

In \S\ref{sec:quasicoherent-sheaves} we define the category
$\QCoh(\cDef)$ of quasicoherent sheaves of rings on the stack $\cDef$
of deformations.  This is largely a summary of material from
\cite{rezk-congruence-condition}, focusing on what we need here.
Using this we define the \emph{Adams operations} which are used in the
proof of the main theorem.

The following section \S\ref{sec:frob-lift-qcoh} is the heart of the
argument, where we apply Frobenius lifting to $\QCoh(\cDef)$.  The
proof of the cofreeness of $\O$ \eqref{thm:cofreeness-of-O} is nearly
immediate, and is given in \S\ref{sec:proof-thm}.  In that section we
also prove the extension \eqref{thm:cofreeness-of-OA}, using some
additional facts from
\cite{burklund-schlank-yuan-chromatic-nullstellensatz}.

\section{Frobenius lifting}
\label{sec:frobenius-lifting}

In the proof of the theorem, we will need to be able to approximate ring
homomorphisms (up to a congruence) by ones which commute with given
``lifts of Frobenius''.  This is of course a very well-known idea.  In
this section we give a careful statement and proof of the version of
this that we will use.

Fix a prime $p$ and an integer $h\geq 1$.  We define a category
$\mc{C}_{p^h}$ of ``rings equipped with a lift of $p^h$-Frobenius''.
Explicitly,  objects are triples $(R,I_R,\psi_R)$, where $R$ is a
commutative ring, $I_R\subseteq R$ is an ideal such that $p\in I_R$, 
and $\psi_R\colon R\ra R$ is a ring endomorphism such that
\[
\psi_R(x) \equiv x^{p^h} \mod I_R \qquad \text{for all $x\in R$.}
\]
A morphism $\mu\colon (R,I_R,\psi_R)\ra (S,I_S,\psi_S)$ in $\mc{C}_{p^h}$ is
a ring homomorphism $\mu\colon R\ra S$ such that $\mu(I_R)\subseteq
I_S$ and $\mu\psi_R=\psi_S\mu$.

Say that an object $(R,I_R,\psi_R)$ of $\mc{C}_{p^h}$ is \dfn{complete} if
$R\xra{\sim} \lim_k R/I_R^k$, and is \dfn{perfect} if $\psi_R$ is an
isomorphism such that $\psi_R(I_R)=I_R$.

\begin{rem}
For any such object $(R,I_R,\psi_R)$, the map $\psi_R$ is a lift
of the $p^h$-power Frobenius on $R/I_R$.  Furthermore, if the object
is perfect, then $R/I_R$ is a perfect ring.
\end{rem}

\begin{rem}
  Note that for an object $(R,I_R,\psi_R)$ in $\mc{C}_{p^h}$, the map
  $\psi_R\colon R\ra R$ is provides a natural  endomorphism in the category
  $\mc{C}_{p^h}$,
  since $x\in I_R$ implies $\psi_R(x)\in I_R$.  Furthermore,  an object is
  perfect iff this endomorphism is an automorphism.
\end{rem}

The following gives criteria which construct a unique morphism of $\mc{C}_{p^h}$
approximating a given  ring homomorphism.  
\begin{prop}[Frobenius lifting]\label{prop:frobenius-lifting}
  Let
  \[
  \iota_R\colon (A,I_A,\psi_A)\ra (R,I_R,\psi_R)\qquad\text{and}
  \qquad \iota_S\colon (A,I_A,\psi_A)\ra (S,I_S,\psi_S)
\]
be morphisms in $\mc{C}_{p^h}$, and suppose that $I_R=\iota_R(I_A)R$ and that  
$(S,I_S,\psi_S)$ is
complete and perfect.  Suppose $\mu\colon R\ra S$ be a ring homomorphism
such that $\mu\iota_R=\iota_S$ (i.e., $\mu$ is a map of $A$-algebras).
Then
there exists a unique morphism $\wh\mu\colon (R,I_R,\psi_R)\ra
(S,I_S,\psi_S)$ in $\mc{C}_{p^h}$ such that
\[
\wh\mu\iota_R=\iota_S\qquad \text{and} \qquad \wh\mu\equiv \mu \mod I_S.
\]
\end{prop}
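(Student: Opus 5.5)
We construct $\wh\mu$ as a limit of the maps $\mu_k \defeq \psi_S^{-k}\,\mu\,\psi_R^k\colon R\ra S$, $k\geq0$. These make sense because $S$ is perfect, so $\psi_S^{-1}$ exists and is a ring automorphism with $\psi_S^{-1}(I_S)=I_S$. Each $\mu_k$ is a ring homomorphism. It is also a map of $A$-algebras, since
\[
\mu_k\iota_R=\psi_S^{-k}\mu\psi_R^k\iota_R=\psi_S^{-k}\mu\iota_R\psi_A^k=\psi_S^{-k}\iota_S\psi_A^k=\psi_S^{-k}\psi_S^k\iota_S=\iota_S .
\]
Moreover $\mu_k(I_R)\subseteq I_S$: because $I_R=\iota_R(I_A)R$, we get $\mu_k(I_R)=\iota_S(I_A)\,\mu_k(R)\subseteq I_S$.

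\textbf{Key estimate.} If $f,g\colon R\ra S$ are $A$-algebra maps with $f\equiv g \bmod I_S^n$ for some $n\geq1$, then $f\psi_R\equiv g\psi_R \bmod I_S^{n+1}$. Indeed, for $x\in R$ write $\psi_R(x)=x^{p^h}+y$ with $y\in I_R$. Since $I_R$ is generated by $\iota_R(I_A)$ and $f,g$ agree on $\iota_R(A)$, we can write $y=\sum \iota_R(a_i) r_i$ with $a_i\in I_A$. Then
\[
f(y)-g(y)=\sum \iota_S(a_i)\bigl(f(r_i)-g(r_i)\bigr)\in I_S\cdot I_S^n .
\]
Also $f(x)^{p^h}-g(x)^{p^h}\in I_S^{n+1}$, by the standard lemma: if $u\equiv v \bmod J$ with $J\supseteq$ powers of an ideal containing $p$, then $u^p\equiv v^p \bmod pJ+J^p$. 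Here $p\in I_S$ and $n\geq1$ give $I_S^{n+1}$. This is where the hypotheses $p\in I_S$ and $I_R=\iota_R(I_A)R$ are used.

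\textbf{Convergence.} First, $\mu_1\equiv\mu \bmod I_S$. To see this, note $\psi_S^{-1}$ induces the inverse of $p^h$-Frobenius on the perfect ring $S/I_S$. Modulo $I_S$ we have $\mu\psi_R(x)\equiv\mu(x)^{p^h}$, so $\psi_S^{-1}\mu\psi_R(x)\equiv\mu(x)$. Now apply the estimate inductively. If $\mu_{k+1}\equiv\mu_k \bmod I_S^{n}$, then precomposing with $\psi_R$ and applying $\psi_S^{-1}$, which preserves $I_S^{n+1}$, gives $\mu_{k+2}\equiv\mu_{k+1} \bmod I_S^{n+1}$. Hence $\mu_{k+1}\equiv\mu_k \bmod I_S^{k+1}$.

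Since $S$ is $I_S$-complete, $\wh\mu\defeq\lim_k\mu_k$ exists pointwise and is a ring homomorphism. It satisfies $\wh\mu\iota_R=\iota_S$ and $\wh\mu\equiv\mu \bmod I_S$, and $\wh\mu(I_R)\subseteq I_S$ because $I_S$ is closed, being the kernel to the discrete quotient $R\mapsto S/I_S$ of the limit. Finally, $\psi_S^{-1}\wh\mu\psi_R=\lim\mu_{k+1}=\wh\mu$, so $\wh\mu$ commutes with the Frobenius lifts.

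\textbf{Uniqueness.} Let $\nu_1,\nu_2$ be two such morphisms. Both are $A$-algebra maps with $\nu_1\equiv\nu_2 \bmod I_S$, and each satisfies $\nu_i=\psi_S^{-1}\nu_i\psi_R$. Iterating the key estimate gives $\nu_1\equiv\nu_2 \bmod I_S^n$ for all $n$. Completeness of $S$ means $\bigcap_n I_S^n=0$, so $\nu_1=\nu_2$.

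I expect the main obstacle to be the key estimate. Specifically, one must handle the cross term $y$ through the generation hypothesis on $I_R$, and check that the $p$-power congruence really raises the filtration degree by one in the presence of the non-nilpotent ideal $I_S$ rather than just $p$.
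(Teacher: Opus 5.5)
Your proposal is correct and follows the paper's proof essentially step for step: the paper also builds $\wh\mu$ as the $I_S$-adic limit of $\psi_S^{-r}\mu\psi_R^{r}$. It uses the same contraction estimate (the $A$-linearity of the difference of two $A$-algebra maps handles the $I_R$-term, and the binomial expansion with $p\in I_S$ handles the $p^h$-th powers), and it gets uniqueness from that estimate together with completeness. One small cleanup: $\wh\mu(I_R)\subseteq I_S$ follows at once from $\wh\mu\equiv\mu \bmod I_S$ and $\mu(I_R)\subseteq I_S$, or from $\wh\mu$ being an $A$-algebra map, so your garbled closedness argument is not needed.
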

\begin{proof}
  Suppose given a $\mu$ as in the statement.  Note that
  $\mu(I_R)=\mu(\iota_R(I_A)R)\subseteq I_S$ since $\mu$ is an $A$-algebra map and
  $I_R=\iota_R(I_A)R$.  Define the function
  \[
  \mu'\defeq \psi_S^{-1}\mu\psi_R \colon R\ra S.
\]
Clearly
\begin{itemize}
\item $\mu'$ is a ring homomorphism,
\item $\mu'\iota_R=\iota_S$, since
  \[
  \mu\psi_R \iota_R = \mu \iota_R\psi_A = \iota_S\psi_A=\psi_S\iota_S.  
\]
\item $\mu'(I_R)\subseteq I_S$, since $\psi_R(I_R)\subseteq I_R$ and
  $\psi_S^{-1}(I_S)=I_S$.
\item $\mu'(x)\equiv \mu(x)\mod I_S$ for all $x\in R$, since
  \[
  \mu\psi_R(x)\equiv \mu(x^{p^h})=\mu(x)^{p^h} \mod \mu(I_R)\subseteq
  I_S\qquad \text{and}
  \qquad
  \psi_S\mu(x) \equiv \mu(x)^{p^h} \mod I_S,
\]
together with the fact that $\psi_S^{-1}(I_S)=I_S$.  
\end{itemize}
We will prove the following Claim, which asserts that ``conjugation by
$\psi$ is a contraction operator'' on ideal preserving $A$-algebra
maps.

\begin{enumerate}
\item [\textit{Claim.}]
  Suppose $\mu_1,\mu_2\colon R\ra S$ are ring homomorphisms such that
  \[
  \mu_i\iota_R=\iota_S, \qquad \mu_i(I_R)\subseteq I_S, \qquad i=1,2.
\]
If $\mu_1(x)\equiv \mu_2(x)\mod I_S^r$ for some $r\geq 1$ and all
$x\in R$, then
$\mu_1'(x)\equiv \mu_2'(x)\mod I_S^{r+1}$ for all $x\in R$.
\end{enumerate}
Given the Claim, since $S$ is complete with respect to $I_S$ we may define
\[
\wh\mu(x)\defeq \lim_{r\to\infty} \psi_S^{-r}\mu \psi_R^r(x) =
\lim_{r\to\infty} \mu^{(r)}(x),
\]
where $\mu^{(r)}\defeq (\mu^{(r-1)})'$.
The function $\wh\mu\colon R\ra S$ clearly satisfies
\begin{itemize}
\item $\wh\mu\iota_R=\iota_S$,
\item $\wh\mu(I_R)\subseteq I_S$,
  \item $\wh\mu\psi_R = \psi_S\wh\mu$, since $\mu^{(r)}\psi_R\equiv
    \psi_S \mu^{(r)} \mod I_S^{r+1}$,
    \item $\wh\mu(x)\equiv \mu(x) \mod I_S$ for all $x\in R$,
    \end{itemize}
    so is the desired map.  Uniqueness is straightforward: given two
     solutions $\wh\mu_1,\wh\mu_2$ in $\mc{C}_{p^h}$ which agree modulo some
    $I_S^r$ with $r\geq1$, the above argument implies they agree
    modulo $I_S^{r+1}$, so completeness of $S$ implies $\wh\mu_1=\wh\mu_2$.

\textit{Proof of Claim.}  Given $\mu_1,\mu_2$ as in the Claim, write
\[
\mu_2(x)=\mu_1(x)+\alpha(x),\qquad \psi_R(x)= x^{p^h}+\beta(x),
\]
defining functions $\alpha\colon R\ra I_S^r$ and $\beta\colon R\ra
I_R$.  Note that $\alpha$ is a map of $A$-modules, 
since for any $a\in A$ and $x\in R$ we have 
\[
\alpha(\iota_R(a)x)= \mu_2(\iota_R(a)x)-\mu_1(\iota_R(a)x)=
\iota_S(a)\mu_2(x)-\iota_S(a)\mu_1(x)=\iota_S(a)\alpha(x).
\]
Thus, $\alpha(I_R)=\alpha(\iota_R(I_A)R)\subseteq I_S\alpha(R)\subseteq
I_S^{r+1}$, and so
$\alpha\beta(R)\subseteq
I_S^{r+1}$.

For $x\in R$ we have
\begin{align*}
  \mu_2\psi_R(x)
  &= \mu_2\bigl( x^{p^h}+\beta(x)\bigr)
  \\
  &= \mu_2(x)^{p^h} + \mu_2\beta(x)
  \\
  &= \bigl(\mu_1(x)+\alpha(x)\bigr)^{p^h} + \mu_1\beta(x) +
    \alpha\beta(x)
  \\
  &= \mu_1(x)^{p^h} + \sum_{k=1}^{p^h-1}
    \binom{p^h}{k}\mu_1(x)^{p^h-k}\alpha(x)^k + \alpha(x)^{p^h} +
    \mu_1\beta(x) + \alpha\beta(x)
  \\
  &=\mu_1\bigl(x^{p^h}+\beta(x)\bigr) +
    \sum_{k=1}^{p^h-1}\binom{p^h}{k}\mu_1(x)^{p^h-k}\alpha(x)^k +
    \alpha(x)^{p^h}+\alpha\beta(x)
  \\
  &\in \mu_1\psi_R(x) + \sum_{k=1}^{p^h-1} p I_S^{rk} + I_S^{rp^h} +
    I_S^{r+1}
  \\
  &\subseteq \mu_1\psi_R(x) + I_S^{r+1},
\end{align*}
since $p\in I_S$ and $p^h>1$.  This proves the Claim.
\end{proof}

For instance, we immediately deduce the following well-known observation.
\begin{cor}
  Let $(S,I_S,\psi_S)$ be a complete and perfect object in
  $\mc{C}_{p^h}$.  Then for every $a\in S/I_S$ there exist a
  unique $\wt{a}\in S$ such that (i) $\wt{a}\equiv a\mod I_S$ and (ii)
  $\psi_S(\wt{a})=\wt{a}^{p^h}$.   
\end{cor}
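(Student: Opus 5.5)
We apply the Frobenius lifting result, Proposition \ref{prop:frobenius-lifting}, to a free object on one generator. Take $A=\Z$ with $I_A=p\Z$ and $\psi_A=\id$; this is an object of $\mc{C}_{p^h}$, since $a^{p^h}\equiv a \bmod p$ by Fermat. Take $R=\Z[t]$ with $I_R=p\Z[t]=\iota_R(I_A)R$ and $\psi_R(t)=t^{p^h}$, extended as a ring map. It is a lift of $p^h$-Frobenius because $f(t)^{p^h}\equiv f(t^{p^h}) \bmod p$; this holds since the Frobenius on $\F_p[t]$ is additive and fixes the coefficients. The map $\iota_R\colon\Z\ra\Z[t]$ is the unit map. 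It respects ideals and commutes with $\psi$, so it is a morphism in $\mc{C}_{p^h}$. The map $\iota_S\colon \Z\ra S$ is the unique ring map. It sends $p\Z$ into $I_S$ because $p\in I_S$, and it commutes with $\psi$ because every ring endomorphism fixes the image of $\Z$.

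For existence, choose any lift $s\in S$ of $a\in S/I_S$, and let $\mu\colon\Z[t]\ra S$ be the ring map with $t\mapsto s$. This is automatically a $\Z$-algebra map. The Proposition gives a morphism $\wh\mu$ in $\mc{C}_{p^h}$ with $\wh\mu\equiv\mu \bmod I_S$. Set $\wt a\defeq\wh\mu(t)$. Then $\wt a\equiv s\equiv a \bmod I_S$. Moreover
\[
\psi_S(\wt a)=\psi_S\wh\mu(t)=\wh\mu\psi_R(t)=\wh\mu(t^{p^h})=\wt a^{p^h}.
\]

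For uniqueness, let $b\in S$ be another element satisfying (i) and (ii). Let $\mu_b\colon\Z[t]\ra S$ be the ring map with $t\mapsto b$. It preserves the ideals, since $\mu_b(p\Z[t])\subseteq pS\subseteq I_S$. It commutes with $\psi$: both $\mu_b\psi_R$ and $\psi_S\mu_b$ are ring maps out of $\Z[t]$, and on $t$ they take the values $b^{p^h}$ and $\psi_S(b)$, which agree by (ii). So $\mu_b$ is a morphism in $\mc{C}_{p^h}$, and the same holds for $\mu_{\wt a}$. Both satisfy $\mu\iota_R=\iota_S$. Both are congruent modulo $I_S$ to the map $\mu$ above: two ring maps out of $\Z[t]$ agree mod $I_S$ on all of $\Z[t]$ once they agree mod $I_S$ on $t$, and $b\equiv a\equiv s$. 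The uniqueness clause of the Proposition then gives $\mu_b=\mu_{\wt a}$, so $b=\wt a$.

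None of these steps is a real obstacle. The only points to check are that the object $(\Z,p\Z,\id)$ lies in $\mc{C}_{p^h}$ and that the hypothesis $I_R=\iota_R(I_A)R$ holds, and both were verified above.
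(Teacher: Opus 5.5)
Your proposal is correct and is exactly the paper's argument: apply the Frobenius lifting proposition with $(A,I_A,\psi_A)=(\Z,p\Z,\id)$, with $(R,I_R,\psi_R)=(\Z[t],p\Z[t],t\mapsto t^{p^h})$, and with $\mu(t)$ equal to any lift of $a$. The only difference is that you write out the routine checks, including the uniqueness argument via the maps $\mu_b$, which the paper's one-line proof leaves to the reader.
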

\begin{proof}
  Apply the theorem with $(A,I_A,\psi_A)=(\Z,p\Z, \id)$ and
  $(R,I_R,\psi_R)=(\Z[x], p\Z[x], x\mapsto x^{p^h})$, using $\mu\colon
  \Z[x]\ra S$ such that $\mu(x)=a'$, where $a'\in S$ is any lift of $a$.
\end{proof}

\section{Quasicoherent sheaves on deformations}
\label{sec:quasicoherent-sheaves}

The material in the first part of this  section summarizes results from 
\cite{rezk-congruence-condition}*{\S11, \S12}, which themselves rely
heavily on work of Strickland
\cite{strickland-finite-subgroups-of-formal-groups}, 
\cite{strickland-morava-e-theory-of-symmetric}.  In particular, we
describe  a category $\QCoh(\cDef)$ of ``quasicoherent sheaves'' on a
``stack of deformations'' and a full subcategory $\QCoh^{\Fr}(\cDef)$
of objects which satisfy the ``Frobenius congurence''.  An explanation
for this is given in \cite{rezk-congruence-condition} (in whose
terminology $\QCoh(\cDef)$ is  the category of commutative 
monoid objects in $\mr{Comod}_{\mc{L}}$).

We fix a formal group $\Gamma$ of height $1\leq h<\infty$ over a
perfect field $\kappa$ of characteristic $p$.

Let $\cR$ denote the subcategory of commutative rings whose objects are 
complete local rings $A$ of characteristic $p$, and whose
morphisms $f\colon A\ra B$ are continuous homomorphisms (i.e., such
that $f^{-1}\m_A=\m_B$).

\subsection{Deformations}

A \dfn{deformation} of $\Gamma/\kappa$ to $A\in \cR$ is a triple
$\mc{G}=(G,i,\gamma)$, consisting of
\begin{enumerate}
\item a formal group $G$ defined over $A$,
\item a ring homomorphism $i\colon \kappa \ra A/\m_A$, and 
\item an isomorphism $\gamma\colon i^*\Gamma\xra{\approx} G_0$ of formal groups
  over $A/\m_A$, where $G_0$ denotes the base change of $G$ to
  $A/\m_A$.    
\end{enumerate}

For any $A\in \cR$ in which $p=0$ we write $\phi\colon A\ra A$ for the
$p$th power map.  Recall that for any formal group $G$ over such $A$
we have relative Frobenius maps
\[
  G  \xra{\Fr^d} (\phi^d)^*G,\qquad  d\geq0.
  \]
Furthermore, if $G$ is a
deformation of a height $h$-formal group, then $\Fr^d$ is an isogeny
of degree $p^{dh}$.

A \dfn{morphism of deformations} $f\colon \mc{G}=(G,i,\gamma)\ra
\mc{G}'=(G',i',\gamma')$ to $A$ is a homomorphism of formal groups $f\colon
G\ra G'$ such that (i) $i'=i\phi^d$, and (ii) the diagram of formal groups
over $A/\m_A$ 
\[\xymatrix{
  {i^*\Gamma} \ar[d]_{\gamma}^{\approx} \ar[r]^{\Fr^d}
  & {(i')^*\Gamma} \ar[d]^{\gamma'}_{\approx}
  \\
  {G_0} \ar[r]_{f_0}
  & {G_0'}
}\]
commutes.  Note that this implies that $f$ is an isogeny of degree
$p^d$, and we'll say that the morphism $f$ has \dfn{height} $d$.

We obtain a category $\cDef(A)$ of deformations.  We note the
following property.
\begin{prop}\label{prop:rigidity-of-deformations}
  For any two objects $\mc{G}, \mc{G}'$ in $\cDef(A)$ and
  $d\in \Z_{\geq0}$   there is at most one morphism $f\colon
 \mc{G}\ra \mc{G}'$ of height $d$.
\end{prop}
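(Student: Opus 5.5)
Two morphisms $f,g\colon \mc{G}\ra\mc{G}'$ of height $d$ are homomorphisms of formal groups $G\ra G'$ over $A$ whose reductions to $A/\m_A$ agree: by condition (ii) both equal $\gamma'\circ\Fr^d\circ\gamma^{-1}$. So the statement reduces to the classical rigidity of homomorphisms of formal groups over a complete local ring with respect to reduction modulo the maximal ideal (Lubin--Tate rigidity). The plan is to show that $u\defeq f-g\colon G\ra G'$, computed with the formal group law of $G'$, vanishes.

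Step one: $u$ is a homomorphism of formal groups over $A$ with $u_0=0$ over $A/\m_A$. Step two: use that $G'$ has finite height $h$ as a deformation of $\Gamma$. Its $p$-series $[p]_{G'}(t)$ is congruent modulo $\m_A$ to a power series in $t^{p^h}$ with unit leading coefficient, so by Weierstrass preparation $[p]_{G'}$ is an isogeny, hence a monomorphism of formal groups. Step three: show that $u$ factors as $[p]_{G'}\circ u_1$ for a homomorphism $u_1\colon G\ra G'$ whose coefficients lie in $\m_A$. Iterating gives $u=[p^n]_{G'}\circ u_n$ with the coefficients of $u$ in $\m_A^n$ for all $n$, and completeness of $A$ then forces $u=0$.

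Step three is the main obstacle, and I would instead use a cleaner order of ideals. Suppose $u\equiv 0$ modulo an ideal $J\subseteq\m_A$. Working over $A/\m_AJ$, the map $u$ is a homomorphism with coefficients in the square-zero ideal $J/\m_AJ$. Since $J^2\subseteq\m_AJ$, the group laws linearize: $u(x+_G y)=u(x)+u(y)$ modulo $\m_AJ$. Then $u\circ[p]_G=[p]_{G'}\circ u$ becomes $u([p]_G(x))=p\,u(x)+(\text{terms involving } u^2)\equiv 0$. Because $[p]_G(x)\equiv v\,x^{p^h}+\cdots$ with $v$ a unit modulo $\m_A$, and $p=0$ in $A$ (objects of $\cR$ have characteristic $p$), the left side is $u$ composed with a series whose leading term is a unit times $x^{p^h}$. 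If $u$ has lowest nonzero coefficient $c\,x^m$ modulo $\m_AJ$, then the composite has lowest term $c\,v^m x^{mp^h}$, which would be nonzero. Hence $u\equiv0$ modulo $\m_AJ$. Starting from $J=\m_A$ and iterating gives $u\equiv 0$ modulo $\m_A^n$ for every $n$, so $u=0$ by completeness, and therefore $f=g$.

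One point to double-check is that the leading coefficient of $[p]_G$ is a unit. This follows because $G_0\cong i^*\Gamma$ has height exactly $h$ over the residue field.
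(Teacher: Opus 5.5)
Your proposal is correct and follows the paper's argument: the paper likewise reduces to the rigidity fact that a homomorphism $f$ over $A$ with $f_0=0$ must vanish and applies it to the difference $f-f'$, taking that fact as known, while you also prove it by induction on the $\m_A$-adic order, using that $[p]_{G_0}$ has height exactly $h$. The one slip is the claim that $p=0$ in $A$, since objects of $\cR$ such as $\O$ and $\O_d$ only have residue characteristic $p$; your induction goes through unchanged, because it needs only $p\in\m_A$ together with the fact that $\m_A$ annihilates $J/\m_A J$, so that only $[p]_G \bmod \m_A=[p]_{G_0}$ enters the lowest-term computation.
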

\begin{proof}
  If a homomorphism $f$ of deformations to $A$ is such that $f_0=0$, then
  $f=0$.  Apply this to the difference $f-f'$ of maps between deformations.
\end{proof}

In particular, between any two objects of $\cDef(A)$ there is at most
one isomorphism (which are precisely the maps of height $0$).  Thus we
can form a quotient category $\oDef(A)\defeq \cDef(A)/\sim$ by
identifying isomorphic objects, and $\cDef(A)\ra \oDef(A)$ is an
equivalence.

Also note that, given a deformation $\mc{G}=(G,i,\gamma)$ and a finite subgroup
scheme $H\leq G$ of rank $p^d$, there exists a unique morphism
$f\colon \mc{G}\ra 
\mc{G}'$ of deformations such that $H=\Ker f$, and the height of $f$
is $d$.

The assignment $A\mapsto \oDef(A)$ defines a functor $\cR\ra \Cat $.
\begin{prop}\label{prop:def-is-aff-gr-cat-scheme}
The functor $\oDef(A)$ is represented up to isomorphism (and so
$\cDef(A)$ is represented up to equivalence)  by an
``affine graded category scheme''.  That is, there exist objects
\[
\O_d,\qquad d\geq0,  \qquad \O=\O_0,
\]
and morphisms
\[
s_d,t_d\colon \O\ra \O_d, \qquad \nabla_{d,e}\colon \O_{d+e} \ra
\O_d\lrtensor{s_d}{\O}{t_d} \O_e,
\]
in $\cR$ such that
\[
\Hom_{\cR}(\O,A) = \operatorname{ob}\cDef(A), \qquad
\coprod_{d\geq0} \Hom_{\cR}(\O_d,A) = \operatorname{mor}\cDef(A), 
\]
and the maps $s_d,t_d,\nabla_{d,e}$ represent ``source'', ``target'',
and ``composition $fg$ where $f$ has height $d$ and $g$ has height
$e$'' respectively.

Furthermore,
\begin{itemize}
\item $\O$ is the Lubin-Tate ring, classifying isomorphism classes of
  deformations,  and
\item $\O_d$ is the $d$th Strickland ring, classifying subgroups of
  rank $p^d$ in a deformation.
\end{itemize}
These are complete local rings with maximal ideals  $\m_d\subset \O_d$
(and we write $\m=\m_0\subset \O)$, with $\O_d/\m_d\approx \kappa$.
The map $s_d$ exhibits $\O_d$ as a finitely generated free $\O$-module.
\end{prop}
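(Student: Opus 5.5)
We prove the representability of $\oDef$ by appealing to the Lubin--Tate theorem and to Strickland's theory of finite subgroups, then assembling the graded category structure from the universal property.

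For objects, Lubin--Tate \cite{lubin-tate-formal-moduli} says that the functor sending $A\in\cR$ to the set of isomorphism classes of deformations of $\Gamma/\kappa$ is corepresented by $\O\approx\Witt_p\kappa\powser{u_1,\dots,u_{h-1}}$, with universal deformation $\Gamma^\univ$. By Proposition~\ref{prop:rigidity-of-deformations}, a deformation has no nontrivial automorphisms, so isomorphism classes are exactly $\ob\oDef(A)$. This gives $\Hom_\cR(\O,A)=\ob\oDef(A)$.

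For morphisms of height $d$, we use the observation after Proposition~\ref{prop:rigidity-of-deformations}. A morphism of height $d$ out of $\mc{G}$ is determined, up to the identification in $\oDef$, by its kernel, which is a finite subgroup scheme of rank $p^d$. Conversely, every such subgroup $H\leq G$ gives a unique height-$d$ morphism $\mc{G}\ra\mc{G}/H$. Therefore $\coprod_d\Hom_\cR(\O_d,A)$ should be the set of pairs consisting of a deformation and a rank-$p^d$ subgroup. Strickland \cite{strickland-finite-subgroups-of-formal-groups} shows that the scheme of subgroups of rank $p^d$ in $\Gamma^\univ$ is $\Spf\O_d$, where $\O_d$ is a complete local ring that is finite free over $\O$ via $s_d$, and $\O_d/\m_d\approx\kappa$.

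The structure maps are then defined by naturality.
\begin{itemize}
\item $s_d$ classifies the source deformation.
\item $t_d$ classifies the quotient deformation $(\Gamma^\univ/H, i\phi^d, \gamma')$. Here $\gamma'$ is induced by identifying $\Fr^d$ with the quotient by the unique rank-$p^d$ subgroup of the special fibre, which is the kernel of Frobenius.
\item $\nabla_{d,e}$ corepresents composition. Given $g$ of height $e$ and $f$ of height $d$, the composite $fg$ has kernel of rank $p^{d+e}$, which is a natural transformation from pairs to $\Spf\O_{d+e}$; by Yoneda this gives the map $\O_{d+e}\ra\O_d\otimes_\O\O_e$.
\end{itemize}
Associativity and unit laws then follow from uniqueness of morphisms of a given height (Proposition~\ref{prop:rigidity-of-deformations}), since both sides of each law are morphisms of the same height.

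The main obstacle is that Strickland's results are stated over $\O$ itself, while here the test objects are rings in $\cR$ of characteristic $p$ together with the twist $i'=i\phi^d$. Two things must be checked.
\begin{itemize}
\item The height-$d$ condition, namely commutativity of the square with $\Fr^d$, must be automatic for the quotient map. On the special fibre over the residue field, every isogeny of degree $p^d$ from a height-$h$ formal group is Frobenius followed by an isomorphism. The extra datum $\gamma'$ absorbs that isomorphism, and this is where the twist $i\phi^d$ is forced.
\item The fibre product in the target of $\nabla_{d,e}$ must be formed with $s_d$ on the left factor and $t_d$ on the right factor, as the notation $\lrtensor{s_d}{\O}{t_d}$ indicates, and the factors $\O_d$ and $\O_e$ must be matched correctly with $f$ and $g$ in the composite $fg$.
\end{itemize}
This bookkeeping is what I would carry out carefully, following \cite{rezk-congruence-condition}*{\S11}.
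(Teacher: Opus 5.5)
Your proposal is correct and follows the same route as the paper. The paper gives no separate proof of this proposition; it assembles the structure from the Lubin--Tate theorem, Strickland's representability of the scheme of rank-$p^d$ subgroups, and Yoneda-style bookkeeping as in \cite{rezk-congruence-condition}*{\S11}, exactly as you do. One minor correction: the statement requires $\O$ and the $\O_d$ to be objects of $\cR$, so ``characteristic $p$'' in the definition of $\cR$ must mean residue characteristic $p$. The ``main obstacle'' you describe about characteristic-$p$ test rings is therefore not actually present, and nothing in your argument depends on it.
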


\begin{rem}
 This structure  is over-determined in view of
  \eqref{prop:rigidity-of-deformations}: in fact, the the maps $s_d$
  and $t_d$ induce a surjective   
  homomorphism $\O\wh{\otimes}_{\mathbb{W}_p} \O\ra \O_{d}$
  \cite{burklund-schlank-yuan-chromatic-nullstellensatz}*{Prop.\
    3.39}, and the $\nabla_{d,e}$ are the unique maps compatible with 
  them.    
\end{rem}

\begin{rem}
  There are canonical isomorphisms $\O_d\approx
  E^0B\Sigma_{p^d}/T^{\mr{Tr}}_d$, where $J^{\mr{Tr}}_d$ is the ideal
  generated by transfers from subgroups of the form $\Sigma_i\times
  \Sigma_{p^d-i}$ where $0<i<p^d$
  \cite{strickland-morava-e-theory-of-symmetric}.  
\end{rem}

\subsection{Trivial deformations}
\label{subsec:trivial-deformations}

Say that a deformation $\mc{G}=(G,i,\gamma)$ to $A$ is \dfn{trivial} if (i)
$\F_p\subseteq A$, and (ii) $\gamma\colon i^*\Gamma\xra{\approx} G_0$
lifts to a (necessarily unique) isomorphism $\wt\gamma\colon \wt{i}^*
\Gamma\xra{\approx} G$ of formal groups over $A$, where $\wt{i}\colon
\kappa\ra A$ is the 
(necessarily unique) lift of $i\colon \kappa\ra A$.  Equivalently,
$\mc{G}$ is trivial iff it is isomorphic to a deformation of the form
$(\wt{i}^*\Gamma, i, \id)$.

We note that a deformation $\mc{G}\in \cDef(A)$ is trivial if and only
if its representing map $\chi_{\mc{G}}\colon \O\ra A$ vanishes on the maximal ideal $\m$.
Furthermore, a morphism $f\colon \mc{G}\ra \mc{G'}$ in $\cDef(A)$ is
between two trivial deformations if and only if its representing map
$\chi_{f}\colon \O_d\ra A$ vanishes on the maximal ideal $\m_d$.

\subsection{Frobenius morphisms}

Say that a morphism $f\colon \mc{G}=(G,i,\gamma)\ra \mc{G}'=(G',i',\gamma')$
is a \dfn{height $d$ Frobenius} if (i) $\F_p\subseteq A$ and (ii) the
homomorphism $f\colon G\ra G'$ of formal groups over $A$ factors as
\[
G \xra{\Fr^d} (i\phi^d)^*G \xra{\approx} G',
\]
i.e., iff $\Ker f\leq G$ is the ``canonical subgroup'' of degree $p^d$.
Equivalently, $f$ is a height $d$ Frobenius if it is isomorphic to a
morphism of the form $(G,i,\gamma)\ra ((\phi^d)^*G, i\phi^d,
(\phi^d)^*\gamma)$.

\begin{prop}\label{prop:nu-properties}
  There are surjective ring  homomorphisms
  \[
  \nu_d\colon \O_d\ra \O/p
\]
with the following properties.
\begin{enumerate}
\item  A height $d$ morphism of deformations is a
height $d$ Frobenius if and only if its representing map $\chi\colon \O_d\ra A$
factors through $\nu_d$.
\item $\nu_d s_d\colon \O\ra \O/p$ is the quotient map  and 
  $\nu_dt_d\colon \O\ra \O/p$ is the $p^d$th power map.
\item The quotient map factors as 
$\O_d \xra{\nu_d} \O/p \ra \O/\m=\kappa$.
\end{enumerate}
\end{prop}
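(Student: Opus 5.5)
The plan is to build $\nu_d$ from the universal height-$d$ Frobenius morphism over $\O/p$, use representability to obtain the factorization criterion (1), and then read off (2) and (3) from the description of that universal morphism.

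\textbf{Construction.} Work over $A=\O/p$. The Lubin--Tate ring is a complete local ring, so $\O/p$ is a complete local ring of characteristic $p$ with residue field $\kappa$, and hence lies in $\cR$. Let $\mc{G}^{\univ}=(G,i,\gamma)$ be the deformation over $\O/p$ obtained by reducing the universal deformation; it is represented by the quotient map $\O\ra\O/p$. There is a height $d$ Frobenius morphism
\[
f\colon \mc{G}^{\univ}\ra \bigl((\phi^d)^*G,\ i\phi^d,\ (\phi^d)^*\gamma\bigr).
\]
We must check that this is a morphism of deformations. Over $A/\m_A$, naturality of relative Frobenius gives $\gamma'\circ\Fr^d=\Fr^d\circ\gamma$, where $\gamma'=(\phi^d)^*\gamma$. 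By Proposition~\ref{prop:def-is-aff-gr-cat-scheme}, $f$ is represented by a map $\nu_d\colon\O_d\ra\O/p$.

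\textbf{Property (2).} Since $\nu_d$ represents $f$, the composite $\nu_d s_d$ represents the source of $f$, which is $\mc{G}^{\univ}$, so $\nu_d s_d$ is the quotient map. The composite $\nu_d t_d$ represents the target $(\phi^d)^*\mc{G}^{\univ}$, and this is classified by $\phi^d\circ(\text{quotient})$, the $p^d$th power map. Because $\nu_d s_d$ is already surjective, $\nu_d$ is surjective.

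\textbf{Property (3).} Composing with $\O/p\ra\kappa$ represents the reduction of $f$ to $\kappa$, which is a Frobenius between trivial deformations. By \S\ref{subsec:trivial-deformations}, its representing map kills $\m_d$. Since $\O_d/\m_d\approx\kappa$, that representing map is the quotient $\O_d\ra\kappa$.

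\textbf{Property (1).} For the forward direction, suppose $\chi\colon\O_d\ra A$ represents a height $d$ Frobenius $g\colon\mc{H}\ra\mc{H}'$. Then $\F_p\subseteq A$. The source $\mc{H}$ is classified by some $\chi_0\colon\O\ra A$, which factors through $\O/p$ as a map $\bar\chi_0\colon\O/p\ra A$ in $\cR$. Pulling $f$ back along $\bar\chi_0$ gives a height $d$ Frobenius with the same source $\mc{H}$. Since relative Frobenius commutes with base change, its target is isomorphic to $((\phi^d)^*H,\dots)$, and $g$ is isomorphic to this same standard morphism. Thus the two morphisms agree as morphisms of $\oDef(A)$, using the rigidity of Proposition~\ref{prop:rigidity-of-deformations}, and so $\chi=\bar\chi_0\nu_d$. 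For the converse, any map $\chi$ that factors through $\nu_d$ represents a base change of $f$, and base change preserves the property of being a height $d$ Frobenius.

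\textbf{Main obstacle.} The delicate point is the identification step in (1). We must justify that "isomorphic to the standard Frobenius morphism" pins down a unique point of $\Hom(\O_d,A)$. This holds because $\oDef$ identifies isomorphic morphisms, meaning isomorphic as arrows, with isomorphisms at both ends. Together with the uniqueness of the isogeny with a given kernel, this makes the canonical-subgroup description sufficient.
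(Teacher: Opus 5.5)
Your proposal is correct and follows the paper's proof: $\nu_d$ is the representing map of the universal height $d$ Frobenius, namely the standard Frobenius on the mod-$p$ reduction of the universal deformation. Property (1) comes from representability, and (2) is read off from the source and target of that morphism. For (3) you argue that the reduction of that Frobenius to $\kappa$ kills $\m_d$; to conclude that the induced map $\O_d/\m_d\to\kappa$ is the standard identification, invoke (2), i.e.\ use the identification of $\O_d/\m_d$ with $\kappa$ via $s_d$. The paper instead gets (3) from (1), using that every morphism between trivial deformations is a Frobenius, which is an equivalent observation.
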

\begin{proof}
The $\nu_d$ are the representing maps of the universal example of a
height $d$ Frobenius.  Statement (3) follows because any morphism
between trivial deformations is a Frobenius.
\end{proof}

\subsection{Quasicoherent sheaves}

By a \dfn{quasicoherent sheaf of rings} on $\cDef$, we mean data
$\ms{R}=(R,\iota_R,\nabla_{R,d})$ consisting of
\begin{enumerate}
\item a commutative ring $R$,
\item a ring homomorphism $\iota=\iota_R\colon \O\ra R$, and
\item ring homomorphisms $\nabla_{R,d}\colon R\ra
  \O_d\lrtensor{s}{\O}{\iota} R$ such that the diagrams
  \[\xymatrix@C=50pt{
    {R} \ar[r]^-{\nabla_{R,d}} \ar[d]_{\nabla_{R,d+e}}
    & {\O_d\lrtensor{s}{\O}{\iota} R} \ar[d]^{\id \otimes
      \nabla_{R,e}}
    & {\O} \ar[r]^{t_d} \ar[d]_{\iota_R}
    & {\O_d} \ar[d]^{\id\otimes\iota_R}
    \\
    {\O_{d+e}\lrtensor{s}{\O}{\iota} R} \ar[r]_-{\nabla_{d,e}\otimes\id}
    & {\O_d\lrtensor{s}{\O}{t} \O_e \lrtensor{s}{\O}{\iota} R}
    & {R} \ar[r]_-{\nabla_{R,d}}
    & {\O_d\lrtensor{s}{\O}{\iota} R}
    }\]
  commute.  
\end{enumerate}
That is, a quasicoherent sheaf is a ``comodule-algebra'' over the
bialgebra $(\O_d, s_d,t_e,\nabla_{d,e})$.  We write $\QCoh(\cDef)$ for
the evident category of quasicoherent sheaves, in which morphisms
$(R,\iota_R,\nabla_{R,d})\ra (S,\iota_S,\nabla_{S,d})$ are ring maps
compatible with 
all the structure.  Observe that $\QCoh(\cDef)$ is symmetric monoidal
via $\otimes_\O$.

\begin{exam}\label{exam:power-ops}
 If $B\in \CAlg_E(\Sp_{K(h)})$ is a $K(h)$-local algebra over Morava
 $E$-theory, then $\pi_0B$ naturally admits the structure of a
 quasicoherent sheaf, where $\nabla_{\pi_0B,d}$ is given by a power
operation: $\pi_0 B \ra \pi_0\ul{\Map}( \Sigma^\infty_+ B\Sigma_{p^d},
B) \approx E^0B\Sigma_{p^d} \otimes_\O \pi_0B \ra \O_d\otimes_\O
\pi_0B$.   As explained in \cite{rezk-congruence-condition}, this
construction factors as
\[
  \CAlg_E(\Sp_{K(h)})\xra{\pi_0} \Alg_\T\ra \QCoh(\cDef).
\]
\end{exam}

\subsection{Frobenius congruence}
\label{subsec:frobenius-congruence}

We say that a quasicoherent sheaf of rings $(R,\iota_R,\nabla_{R,d})$ on
$\cDef$ satisfies the 
\dfn{Frobenius congruence} if the diagrams
\[\xymatrix{
  {R} \ar[r]^-{\nabla_{R,d}} \ar[d]_{\phi^d}
  & {\O_d\lrtensor{s}{\O}{\iota} R} \ar[d]^{\nu_d\otimes \id}
  \\
  {R/p} \ar[r]_-{\approx}
  & {\O/p\lrtensor{s}{\O}{\iota} R}
}\]
commute for all $d\geq 0$, where $\phi^d\colon R\ra R/p$ is the map
induced by taking $p^d$th powers.  It is straightforward to see that
the Frobenius congruence holds iff the single diagram for $d=1$ commutes.

\begin{rem}
  The idea is that ``evaluation'' of the quasicoherent sheaf
  $(R,\iota_R,\nabla_{R,d})$ at the
  universal example of a height $d$ Frobenius produces a map
  $R\xra{(\nu_d\otimes \id)\nabla_{R,d}} R/p$.  The Frobenius
    congruence asserts that this map of rings should also be a $p^d$th power
    map.
\end{rem}

\begin{exam}
  The data $\ms{O}=(\O, \id, t_d)$ defines a quasicoherent sheaf satisfying
  the Frobenius congruence, since $t_d\nu_d=\phi^d$.  This is the unit
  object of the monoidal structure on $\QCoh(\cDef)$.
\end{exam}

\begin{exam}\label{exam:power-ops-frobenius}
  The construction of \eqref{exam:power-ops} always gives  a
  quasicoherent sheaf which satisfies the Frobenius congruence, i.e.,
  we have
  \[
  \CAlg_E(\Sp_{K(h)})\xra{\pi_0} \Alg_\T\ra
  \QCoh^{\Fr}(\cDef)\subseteq \QCoh(\cDef).
    \]
\end{exam}

\subsection{The $p$th power map}

Every formal group $G$ has a $p$th power endomorphism $p\colon G\ra G$, which
commutes with all homomorphisms, and thus can be regarded as an
endomorphism of the identity functor on the category of formal groups.
The $p$th power generally fails to be
an endomorphism of deformations.  However, it does give rise to a
natural transformation
\[
p\colon \Id\ra \Sigma \quad\text{of functors}\quad \cDef\ra \cDef,
\]
and hence a natural ``Adams operation'' $\psi_R \colon R\ra R$ for every
quasicoherent sheaf $(R, \iota_R,\nabla_{R,d})$, which for
quasicoherent sheaves which satisfy the Frobenius congruence is a lift
of $p^h$-power  
Frobenius.  We describe these below.

First note that since $\Gamma/\kappa$ is a formal group of height $h$,
its $p$th power map satisfies $p=\sigma\cdot \Fr^h$ for a unique
isomorphism $\sigma\colon (\phi^h)^*\Gamma\xra{\approx} \Gamma$ of
formal groups over $\kappa$.

For any deformation $\mc{G}=(G,i,\Gamma)$ to $R$ the $p$th power
map on  $G$ induces a
height $h$ morphism of deformations
\[
[p]\colon (G,i,\gamma) \ra (G, i\phi^h, \gamma\cdot i^*\sigma).
\]
Furthermore, any morphism of deformations fits in a commutative square 
\begin{equation}\label{eq:pth-power-natural-transformation}
  \xymatrix{
  {(G,i,\gamma)} \ar[r]^-{[p]} \ar[d]_{f}
  & {(G,i\phi^h,\gamma\cdot i^*\sigma)}  \ar[d]^{f}
  \\
  {(G',i',\gamma')} \ar[r]_-{[p]} 
  & {(G',i'\phi^h,\gamma'\cdot i^*\sigma)}
}
\end{equation}
since $fp=pf$ as maps of formal groups.

Thus, we have a functor $\Sigma\colon \cDef\ra \cDef$, given  on
objects by
\[
\Sigma(G,i,\gamma) \defeq (G,i\phi^h, \gamma\cdot i^*\sigma),
\]
and a natural transformation $[p]\colon \Id\ra \Sigma$, given on
objects by the maps $[p]$ above.  Note that the functor $\Sigma$ is an
equivalence.  

\begin{rem}
Recall that $\Aut(\Gamma/\kappa)$, the group of automorphisms of the
map $\Gamma\ra 
\Spec \kappa$ (i.e., formal group isomorphisms covering field
automorphisms), acts on the space of deformations, and in fact on the
category $\cDef$.  The functor $\Sigma$ is precisely the evaluation of
this action at an element of $\Aut(\Gamma/\kappa)$, namely that
represented by  $\sigma\colon (\phi^h)^*\Gamma\ra\Gamma$.
\end{rem}

\begin{exam}
  Recall that $\Gamma/\kappa$ is called a \dfn{Honda formal group} if
  $\kappa\subseteq \F_{p^h}$ and $p=\Fr^h$, i.e., iff $\sigma$
  is the identity map.  Thus, for the Honda formal group 
  $\Sigma\colon \cDef\ra \cDef$ is the identity functor.
\end{exam}

\begin{prop}\label{prop:pth-power-representing-properties}
  \forcepar
  \begin{enumerate}
    \item
  There are isomorphisms
  \[
  \psi_{\O_d}\colon \O_d\ra \O_d, \qquad d\geq0, \qquad
  (\psi_{\O_0}=\psi_\O)
\]
which represent the functor $\Sigma$.
\item
  There is a
homomorphism
\[
q\colon \O_h\ra \O
\]
in $\cR$ which represents the natural transformation $[p]$, in the
sense that if $\mc{G}$ is classified by $\chi\colon \O\ra R$, then the
map $[p]\colon \mc{G}\ra \Sigma\mc{G}$ is classified by $\chi q\colon
\O_h\ra \O$.
\item  We have that $qs_h=\id$ and $qt_h=\psi_\O$  and that the diagram 
  \[\xymatrix{
    {\O_{d+h}} \ar[r]^-{\nabla_{d,h}} \ar[d]_{\nabla_{h,d}}
    & {\O_d\lrtensor{s}{\O}{t} \O_h} \ar[d]^{\psi_{\O_d}\otimes q}
    \\
    {\O_h\lrtensor{s}{\O}{t} \O_d} \ar[r]_-{q\otimes \id}
    & {O_d}
  }\]
commutes for all $d$.
\item  The map $\psi_{\O_d}$ satisfies $\psi_{\O_d}(x)=x^{p^h} \mod\m_d$.
  \end{enumerate}
\end{prop}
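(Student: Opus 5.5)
The whole argument is Yoneda applied to the representability statement \eqref{prop:def-is-aff-gr-cat-scheme}: each claim about the rings $\O_d$ is read off from a statement about the functor $\Sigma$ or the transformation $[p]$ on the categories $\cDef(A)$, $A\in\cR$.

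For (1), the functor $\Sigma$ is an equivalence. It is natural in $A$, since it is defined by $(G,i,\gamma)\mapsto(G,i\phi^h,\gamma\cdot i^*\sigma)$. It also preserves heights of morphisms, because $f\mapsto f$ keeps the underlying homomorphism, hence its degree. So $\Sigma$ induces, for each $d$, a natural bijection of $\coprod$-components $\Hom_\cR(\O_d,A)\to\Hom_\cR(\O_d,A)$. By Yoneda this is precomposition with a unique $\psi_{\O_d}$. That $\psi_{\O_d}$ is an automorphism follows because $\Sigma$ is invertible: its inverse uses $\sigma^{-1}$ and $\phi^{-h}$ on the perfect field $\kappa$. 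Since $\Sigma$ is a functor, it is automatically compatible with $s_d,t_d,\nabla_{d,e}$.

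For (2), $[p]$ assigns to each object $\mc{G}$ a height-$h$ morphism $\mc{G}\to\Sigma\mc{G}$, naturally in $A$. By Yoneda, a natural map $\Hom_\cR(\O,A)\to\Hom_\cR(\O_h,A)$ is precomposition with a unique $q\colon\O_h\to\O$.

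For (3), the identities $qs_h=\id$ and $qt_h=\psi_\O$ express that $[p]$ has source $\mc{G}$ and target $\Sigma\mc{G}$. The square expresses the naturality square \eqref{eq:pth-power-natural-transformation}. Consider the universal height-$d$ morphism $f$, classified by $\id\colon\O_d\to\O_d$. Going one way round, $[p]\circ f$ is the composite of a height-$h$ morphism followed by a height-$d$ one (with the paper's ordering convention for $\nabla$). It is classified by $\O_{d+h}\xra{\nabla}\O_h\otimes\O_d\xra{q\otimes\id}\O_d$. Going the other way, $\Sigma f\circ[p]$ is classified by $\nabla_{d,h}$ followed by $\psi_{\O_d}\otimes q$. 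These two classifying maps agree because the two composites are equal as morphisms; both have height $d+h$, so \eqref{prop:rigidity-of-deformations} identifies them. The main care needed is matching the tensor factors $s$ versus $t$ with the composition order and the source/target conventions.

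For (4), the congruence reduces to a check on the residue field. Compose $\psi_{\O_d}$ with $\O_d\to\O_d/\m_d\approx\kappa$ and with more general trivial test rings. Morphisms between trivial deformations are exactly those whose classifying map kills $\m_d$ (\S\ref{subsec:trivial-deformations}). On the trivial locus, i.e.\ for $A$ of characteristic $p$ with $\mc{G}=(\wt i^*\Gamma,i,\id)$, the deformation $\Sigma\mc{G}=(\wt i^*\Gamma,i\phi^h,\sigma)$ is isomorphic to $(\phi^h)^*\mc{G}$ via $\sigma$. Hence the reduction of $\psi_{\O_d}$ mod $\m_d$ agrees with the composite $\O_d\to\kappa\xra{\phi^h}\kappa$.

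To get the statement for all $x\in\O_d$ mod $\m_d$, I would show that $\bar\psi\colon\O_d/\m_d\to\O_d/\m_d$ is $\phi^h$. Both maps $\O_d\to\kappa$ classify the same morphism over $\kappa$, because there is at most one morphism of a given height. That morphism is $\Sigma$ applied to the morphism classified by the quotient map, and it is identified with $(\phi^h)^*$ of it. By Yoneda, $\psi_{\O_d}$ followed by the quotient equals the quotient followed by $\phi^h$, which is the desired congruence $\psi_{\O_d}(x)\equiv x^{p^h}\bmod\m_d$.
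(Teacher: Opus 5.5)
Your proposal is correct and follows essentially the paper's proof: (1) and (2) by Yoneda from representability, (3) as the naturality square \eqref{eq:pth-power-natural-transformation} evaluated on the universal height-$d$ morphism, and (4) by reducing to the trivial/universal case over $\kappa$, where rigidity identifies $\Sigma$ of a morphism with its base change along $\phi^h$. You simply make explicit the Yoneda and rigidity details, including the $s$/$t$ tensor-factor bookkeeping, that the paper leaves as ``immediate''.
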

\begin{proof}
  The existence of $\psi_{\O_d}$ and $q$ is immediate.  The square in
  (3) encodes the fact that $[p]$ is a natural transformation: the two
  ways of going around the square correspond to the two ways of going
  around \eqref{eq:pth-power-natural-transformation}.  
In view of
  \eqref{subsec:trivial-deformations}, to prove (4) it suffices to compute the
  action of the functor $\Sigma$ on trivial deformations, in which
  case we can reduce to the universal example of such, namely
  $(\Gamma, \id,\id)\in \cDef(\kappa)$.    
\end{proof}

\subsection{The Adams operation  $\psi_R$}
\label{subsec:adams-operation}

Given a quasicoherent sheaf $(R,\iota, \nabla_{R,d})$, we define a map
$\psi_R\colon R\ra R$ to be the composite of
\[
R\xra{\nabla_{R,h}} \O_h\lrtensor{s}{\O}{\iota} R \xra{q\otimes \id}
\O\lrtensor{s}{\O}{\iota} R=R.
\]
The key properties we need of this are the following.
\begin{prop}\label{prop:psi-properties}
  Let $(R,\iota_R,\nabla_{R,d})$ be a quasicoherent sheaf on $\cDef$.
  \begin{enumerate}
  \item We have that $\psi_R\iota_R= \iota_R\psi_{\O}$.  
  \item  the diagram
    \[ \xymatrix{
      {R} \ar[r]^-{\nabla_{R,d}} \ar[d]_{\psi_R}
      & {\O_d\lrtensor{s}{\O}{\iota} R}  \ar[d]^{\psi_{\O_d}\otimes \psi_R}
      \\
      {R} \ar[r]_-{\nabla_{R,d}}
      & {\O_d\lrtensor{s}{\O}{\iota} R}
    }\]
  commutes for all $d\geq0$.

  \item If $(R,\iota,\nabla_{R,d})$ satisfies the Frobenius
    congruence, then $\psi_R(x)\equiv x^{p^h} \mod \m R$.
  \end{enumerate}
\end{prop}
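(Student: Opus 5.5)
We prove the three statements in order. Each one comes from formally combining the comodule-algebra axioms for $(R,\iota_R,\nabla_{R,d})$ with the properties of $q$ and $\psi_{\O_d}$ recorded in \eqref{prop:pth-power-representing-properties}.

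For (1), we use the second commutative square in the definition of a quasicoherent sheaf with $d=h$, namely $\nabla_{R,h}\iota_R=(\id\otimes\iota_R)t_h$. Composing with $q\otimes\id$ gives
\[
\psi_R\iota_R=(q\otimes \id)(\id\otimes\iota_R)t_h=\iota_R\, q t_h .
\]
Since $qt_h=\psi_\O$ by \eqref{prop:pth-power-representing-properties}(3), this equals $\iota_R\psi_\O$. One point needs care: the identification $\O\lrtensor{s}{\O}{\iota}R=R$ uses $qs_h=\id$, so that $q\otimes\id$ is well defined on the tensor product.

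For (2), we compute both composites $R\ra\O_d\otimes R$ via the coassociativity square with indices $(d,h)$ and $(h,d)$. First, $\nabla_{R,d}\psi_R=\nabla_{R,d}(q\otimes\id)\nabla_{R,h}$. Because $q\otimes\id$ only affects the $\O_h$ factor, this equals $(q\otimes\id)(\id_{\O_h}\otimes\nabla_{R,d})\nabla_{R,h}$, and by coassociativity that is $(q\otimes\id)(\nabla_{h,d}\otimes\id)\nabla_{R,d+h}$. On the other side, $(\psi_{\O_d}\otimes\psi_R)\nabla_{R,d}=(\psi_{\O_d}\otimes q\otimes\id)(\id\otimes\nabla_{R,h})\nabla_{R,d}$, which by coassociativity is $(\psi_{\O_d}\otimes q\otimes \id)(\nabla_{d,h}\otimes\id)\nabla_{R,d+h}$. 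The square in \eqref{prop:pth-power-representing-properties}(3) says exactly that $(q\otimes\id)\nabla_{h,d}=(\psi_{\O_d}\otimes q)\nabla_{d,h}$, so the two composites agree. The main bookkeeping task is to check that every tensor identification is compatible: $\psi_{\O_d}$ must be semilinear along $s_d$ with respect to $\psi_\O$, which holds because $\Sigma$ is a functor, and this is what makes $\psi_{\O_d}\otimes\psi_R$ well defined using (1).

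For (3), the key fact is that $q$ reduces to $\nu_h$ modulo $\m$: the composite $\O_h\xra{q}\O\ra\kappa$ equals $\O_h\xra{\nu_h}\O/p\ra\kappa$. To see this, note that the composite $\O_h\xra{q}\O\ra\kappa$ classifies $[p]$ on the trivial deformation $(\Gamma,\id,\id)$. There $p=\sigma\cdot\Fr^h$, so this morphism is a height $h$ Frobenius, and its classifying map therefore factors through $\nu_h$ by \eqref{prop:nu-properties}(1). Since the reduction $\O/p\ra\kappa$ is uniquely determined, as it is a map of $\O$-algebras via $s$ by \eqref{prop:nu-properties}(2), the two composites agree. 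Then, modulo $\m R$,
\[
\psi_R=(q\otimes\id)\nabla_{R,h}\equiv(\nu_h\otimes\id)\nabla_{R,h}\equiv \phi^h ,
\]
where the last step is the Frobenius congruence, which holds modulo $p$ and hence modulo $\m\ni p$. This gives $\psi_R(x)\equiv x^{p^h}\bmod \m R$.

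I expect the main obstacle to be this last step: justifying rigorously that the reductions of $q$ and $\nu_h$ agree as maps $\O_h\ra\kappa$. Since $\O_h/\m_h\approx\kappa$ and both maps are continuous, it suffices to show that both kill $\m_h$ and induce the same map on $\kappa$. The argument for this is that both classify morphisms between trivial deformations, and \eqref{prop:rigidity-of-deformations} then forces them to coincide.
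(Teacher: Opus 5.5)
Your proposal is correct and follows the paper's proof. Parts (1) and (2) are the same diagram chase: the $\iota$-compatibility square, coassociativity for $(d,h)$ and $(h,d)$, and the naturality square of \eqref{prop:pth-power-representing-properties}(3). Part (3) uses the same two-square diagram, with one small variation: you deduce $q\equiv\nu_h$ modulo $\m$ from the fact that $[p]$ on the trivial deformation is a height $h$ Frobenius together with \eqref{prop:nu-properties}(1),(2), whereas the paper cites \eqref{prop:nu-properties}(3) directly. Your closing appeal to rigidity is unnecessary, because your preceding argument already settles that point; the rigidity route would also require checking that the targets agree, via \eqref{prop:pth-power-representing-properties}(4).
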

\begin{proof}
  The identity (1) is immediate using
  \eqref{prop:pth-power-representing-properties}.
  The square (2) is the perimeter of
    \[\xymatrix@C=10pt@R=15pt{
    {R} \ar[rr]^{\nabla_{R,d}} \ar[dr]^{\nabla_{R,d+h}} \ar[dd]_{\nabla_{R,h}}
    && {\O_d\lrtensor{s}{\O}{\iota} R} \ar[dd]^{\id\otimes\nabla_{R,h}}
    \\
    & {\O_{d+h}\lrtensor{s}{\O}{\iota} R} \ar[dr]^{\nabla_{d,h}\otimes \id}
    \ar[dd]^{\nabla_{h,d}\otimes \id}
    \\
    {\O_h\lrtensor{s}{\O}{\iota} R} \ar[dr]^{\id\otimes \nabla_{R,d}}
    \ar[dd]_{q\otimes \id}
    && {\O_d\lrtensor{s}{\O}{t} \O_h \lrtensor{s}{\O}{\iota} R}
    \ar[dd]^{\psi_{\O_d}\otimes q\otimes \id}
    \\
    & {\O_h\lrtensor{s}{\O}{t} \O_d \lrtensor{s}{\O}{\iota} R} \ar[dr]^{q\otimes
      \id\otimes \id}
    \\
    {R} \ar[rr]_{\nabla_{R,d}}
    && {\O_d\lrtensor{s}{\O}{\iota} R}
  }\]
The top and left cells commute since $(R,\iota_R,\nabla_{R,d})$ is a sheaf of
rings.  The bottom cell trivially commutes.  The right cell
commutes by \eqref{prop:pth-power-representing-properties}(3).
To prove statement (3), consider
\[\xymatrix{
    {R} \ar[r]^-{\nabla_{R,h}} \ar[d]_{\mr{proj}}
    & {\O_h\lrtensor{s}{\O}{\iota} R} \ar[d]^{\nu_h\otimes\id}
    \ar[r]^-{q\otimes \id} 
    & {\O\lrtensor{s}{\O}{\iota} R}  \ar[d]
  \\
  {R/p} \ar[r]_-{x\mapsto x^{p^h}}
  & {\O/p\,\lrtensor{s}{\O}{\iota} R} \ar[r] 
  & {\O/\m\,\lrtensor{s}{\O}{\iota} R}
  }\]
Commutativity of the left-hand square is the Frobenius congruence,
while commutativity of the right-hand square is  immediate from
\eqref{prop:nu-properties}(3).   
\end{proof}

\begin{exam}\label{exam:adams-op-on-unit}
  The Adams operation on the unit object $\ms{O}=(\O, \id, t_d)$  is
  given by $(q\otimes \id)t_d=\psi_\O$.  That is, the Adams operation
  on $\ms{O}$ coincides with the stucture map $\psi_\O$ defined in
  \eqref{prop:pth-power-representing-properties}.  In particular, it
  is an isomorphim.
\end{exam}

\begin{rem}
  We can interpret \eqref{prop:psi-properties} in the following way:
  given a 
  quasicoherent sheaf $\ms{R}=(R,\iota_R,\nabla_{R,d})$, we can define a new
  quasicoherent sheaf $\Sigma^*\ms{R}$ by basechange along the functor
  $\Sigma$.  This has the form
  \[
  \Sigma^*\ms{R} = (R, \iota_R\psi_{\O},  (\psi_{\O_d}\otimes \id)\nabla_{R,d}).
\]
The natural transformation $[p]\colon \Id\ra \Sigma$, induces a map
$\Sigma^*\ms{R}\ra \ms{R}$ of quasicoherent sheaves, which is realized
by the operator $\psi_R$.  On trivial deformations the map $[p]$ is
necessarily a height $h$ Frobenius, giving a congruence for $\psi_R$
when $\ms{R}$ satisfies the Frobenius congruence.
\end{rem}

\begin{rem}
Since $\pi_0B$ of any $B\in \CAlg_E(\Sp_{K(h)})$ is a $\T$-algebra,
and $\T$-algebras have an underlying quasicoherent sheaf satisfying
the Frobenius congruence, this
construction provides a natural operation $\psi\colon \pi_0B\ra
\pi_0B$ for any $K(h)$-local commutative $E$-algebra $B$.  This
operation can be regarded as a $p$th power \dfn{Adams operation}, and
in fact specializes to the usual one when $E$ is $p$-completed complex
$K$-theory.   
\end{rem}

\section{Frobenius lifts in quasicoherent sheaves}
\label{sec:frob-lift-qcoh}

Observe that, for a height $h$ formal group $\Gamma/\kappa$ over a
perfect field, every quasicoherent sheaf of rings $(R, \iota_R, \nabla_R)$ on
$\cDef$ naturally gives rise to an object $(R, \m_R,
\psi_R)$ of $\mc{C}_{p^h}$, using the operator $\psi_R$ defined above.
We say that   $(R,\iota_R,\nabla_R)$ is \dfn{complete} if it is so as
an object of $\mc{C}_{p^h}$, i.e., if $R\xra{\sim} \lim_k R/\m$.  
We say that it is \dfn{perfect} if it is perfect
as an object of $\mc{C}_{p^h}$, i.e., if $\psi_R$ is an isomorphism
such that $\psi_R(\m_R)=\m_R)$.

\begin{exam}
  The initial sheaf $\ms{O}=(\O,\id,t_d)$ gives  $(\O, \m,\psi_\O)$ in
  $\mc{C}_{p^h}$ which is complete and perfect.
\end{exam}

One other example will be important.
\begin{exam}
  For each $d\geq 0$, the object $(\O_d,\m_d,\psi_{\O_d})$ is a 
  complete and perfect object of $\mc{C}_{p^h}$.  
\end{exam}

\begin{thm}\label{thm:lifting-in-qcoh}
  Let $(R,\iota_R,\nabla_{R,d})$ and $(S,\iota_S,\nabla_{S,d})$
  objects of $\QCoh(\cDef)$ which satisfy the Frobenius congruence,
  and 
  suppose that $(S, \m S,
  \psi_{S})$ is complete and perfect as an
  object of $\mc{C}_{p^h}$.  
  Then for any $\O$-algebra map $\mu\colon R\ra S$, there exists a
  unique map of quasicoherent sheaves  $\wh{\mu}\colon R\ra S$ such
  that $\wh\mu\equiv 
  \mu$ modulo $\m S$.
\end{thm}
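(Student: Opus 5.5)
The plan is to follow the two-step outline from the introduction. First use Frobenius lifting \eqref{prop:frobenius-lifting} to produce a unique $\psi$-equivariant approximation of $\mu$. Then show that any $\O$-algebra map commuting with Adams operations is automatically a map of quasicoherent sheaves.

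\emph{Step 1.} Apply \eqref{prop:frobenius-lifting} with $(A,I_A,\psi_A)=(\O,\m,\psi_\O)$, $(R,\m R,\psi_R)$ and $(S,\m S,\psi_S)$. The structure maps $\iota_R,\iota_S$ are morphisms in $\mc{C}_{p^h}$ by \eqref{prop:psi-properties}(1). By \eqref{prop:psi-properties}(3) and the Frobenius congruence, all three triples are objects of $\mc{C}_{p^h}$. We have $I_R=\iota_R(\m)R$ by definition, and $S$ is complete and perfect by hypothesis. This yields a unique $\O$-algebra map $\wh\mu$ with $\wh\mu\psi_R=\psi_S\wh\mu$ and $\wh\mu\equiv\mu \bmod \m S$.

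Uniqueness in the theorem then follows once Step 2 is established. Indeed, \eqref{prop:psi-properties}(2) with $d=h$, composed with $q\otimes\id$, shows that any map of quasicoherent sheaves commutes with $\psi$. So any sheaf map congruent to $\mu$ is a morphism in $\mc{C}_{p^h}$ congruent to $\mu$, and hence equals $\wh\mu$.

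\emph{Step 2 (main obstacle).} We must show that $\wh\mu$ is compatible with every $\nabla_d$, i.e.\ that
\[
(\id\otimes\wh\mu)\nabla_{R,d}=\nabla_{S,d}\,\wh\mu\colon R\ra \O_d\lrtensor{s}{\O}{\iota}S.
\]
The idea is to recognize both sides as morphisms in $\mc{C}_{p^h}$ that agree modulo the maximal ideal, and then invoke the uniqueness half of \eqref{prop:frobenius-lifting}.

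Set $T=\O_d\lrtensor{s}{\O}{\iota}S$ with ideal $\m_dT$ and endomorphism $\psi_{\O_d}\otimes\psi_S$. Since $\O_d$ is finite free over $\O$ via $s_d$, $T$ is a finite free $S$-module. It is therefore complete, and $\psi_{\O_d}\otimes\psi_S$ is an isomorphism; by \eqref{prop:pth-power-representing-properties}(3) it is compatible with $s_d$, because $qt_h\!=\!\psi_\O$ gives $\psi_{\O_d}s_d=s_d\psi_\O$. The congruence $\equiv x^{p^h}\bmod \m_dT$ follows from \eqref{prop:pth-power-representing-properties}(4) and \eqref{prop:psi-properties}(3). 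Here I would take $\m_dT$ as the ideal, noting that $\m T\subseteq\m_dT$.

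Both maps $R\ra T$ are $\O$-algebra maps, where $\O$ acts on $T$ via $t_d$. Both commute with $\psi$, by \eqref{prop:psi-properties}(2) together with $\wh\mu\psi_R=\psi_S\wh\mu$. To apply uniqueness we need the ideal condition $I_R=\iota_R(I_A)R$, so we take $A=\O$ with $I_A=\m$. The image of $\m$ under $t_d$ lies in $\m_d$, but $\m_dT$ need not equal $t_d(\m)T$. This is harmless: uniqueness in the Claim only needs $\mu_i(I_R)\subseteq I_T$ and $p\in I_T$, and $\m R$ maps into $\m_dT$.

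For the congruence modulo $\m_dT$: since $T/\m_dT=\kappa\otimes_\O S/\m S$ and the map $\O_d\ra\kappa$ factors through $\nu_d$ by \eqref{prop:nu-properties}(3), the Frobenius congruence identifies both reductions with $x\mapsto \wh\mu(x)^{p^d}$ in $S/\m S$. Because $\kappa$ is perfect, twisting by $\phi^d$ is an isomorphism, so the two maps agree modulo $\m_dT$. Uniqueness then forces equality, and $\wh\mu$ is a map of quasicoherent sheaves. The delicate point to check carefully is exactly this: that completeness and perfectness of $T$ hold with the chosen ideal, and that the reduction mod $\m_d$ really passes through $\nu_d$ as claimed.
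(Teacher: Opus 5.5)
Your proposal is correct and follows essentially the same route as the paper. Frobenius lifting over $(\O,\m,\psi_\O)$ produces the $\psi$-equivariant $\wh\mu$, and a second use of the uniqueness half of \eqref{prop:frobenius-lifting} in $T=\O_d\lrtensor{s}{\O}{\iota}S$ with ideal $\m_dT$ identifies $(\id\otimes\wh\mu)\nabla_{R,d}$ with $\nabla_{S,d}\wh\mu$; your explicit check, via the Frobenius congruence and \eqref{prop:nu-properties}(3), that these two maps agree modulo $\m_dT$ supplies a step the paper leaves implicit.

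The point you left open (completeness of $T$ for $\m_dT$) is settled as in the paper. The inclusion $\m T\subseteq\m_dT$ alone points the wrong way and does not transfer completeness. What does the job is that $\O_d/s_d(\m)\O_d$ is a finite-dimensional local $\kappa$-algebra, hence artinian, so $\m_d^N\subseteq s_d(\m)\O_d$ for some $N$; the $\m_dT$-adic and $\m T$-adic topologies on $T$ therefore coincide. Perfectness also holds: $\psi_T(\m_dT)=\m_dT$ because the automorphism $\psi_{\O_d}$ of the local ring $\O_d$ preserves $\m_d$.
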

\begin{proof}
Since $(S,\m S, \psi_S)$ is complete and perfect, the pair of objects
$\iota_R\colon 
(\O,\m,\psi_{\O}) \ra (R,\m R,\psi_R)$ and 
$\iota_S\colon (\O,\m,\psi_{\O})\ra (S,\m S, 
\psi_S)$ in $\mc{C}_{p^h}$ satisfy the hypotheses of Frobenius lifting
\eqref{prop:frobenius-lifting}.  Thus there exists a unique
$\O$-algebra map $\wh\mu\colon R\ra S$ such 
that 
\begin{equation}\label{eq:psi-commute}
\vcenter{\xymatrix{
  {R} \ar[r]^{\wh\mu} \ar[d]_{\psi_R}
  & {S} \ar[d]^{\psi_{S}}
  \\
  {R} \ar[r]_{\wh\mu}
  & {S}
}}
\end{equation}
commutes, and such that $\wh\mu\equiv \mu\mod \m S$.

Next consider the 
possibly non-commutative diagram
\[\xymatrix{
  {R} \ar[r]^-{\nabla_{R,d}} \ar[d]_{\mu}
  & {\O_d \lrtensor{s}{\O}{\iota_R} R} \ar[d]^{\id\otimes \mu}
  \\
  {S} \ar[r]_-{\nabla_{S,d}} 
  & {\O_d\lrtensor{s}{\O}{\iota_S} S}
}\]
for any $d\geq 0$.
Let  $\alpha=(\id_{\O_d}\otimes \mu)\nabla_{R,d}$ and $\beta=\nabla_{S,d}\mu$ be
the two composite maps $R\ra \O_d\ltensor{s}{\O} S$ around the sides of
the square.  The proposition will follow once we can show that
$\alpha=\beta$.  We will prove this using another application of Frobenius lifting.

First note that both $\alpha$ and $\beta$ are $\O$-algebra
maps, since
\[
\alpha\iota_R = (\id_{\O_d}\otimes \mu)\nabla_{R,d}\iota_R =
(\id_{\O_d}\otimes \mu)(\id_{\O_d}\otimes\iota_{R,d})t_d =
(\id_{\O_d}\otimes \iota_S)t_d = \nabla_{S,d}\iota_S
=\nabla_{S,d}\mu\iota_R = \beta\iota_R.
\]

Next I claim that
\[
(\psi_{\O_d}\otimes \psi_{S})\alpha = \alpha\psi_R,\qquad (\psi_{\O_d}\otimes
\psi_{S})\beta=\beta\psi_R. 
\]
In the first case,
\begin{align*}
  (\psi_{\O_d}\otimes \psi_{S})\alpha
  &= (\psi_{\O_d}\otimes\psi_S)(\id_{\O_d}\otimes \mu)\nabla_{R,d}
  \\
  &= (\id_{\O_d}\otimes \mu)(\psi_{\O_d}\otimes \psi_R)\nabla_{R,d}
  \\
  &= (\id_{\O_d}\otimes \mu) \nabla_{R,d} \psi_R
    & \text{by \eqref{prop:psi-properties},}
  \\
  &= \alpha\psi_R.
\end{align*}
In the second case
\begin{align*}
  (\psi_{\O_d}\otimes \psi_{S})\beta
  &= (\psi_{\O_d}\otimes \psi_{S})\nabla_{S,d} \mu
  \\
  &=\nabla_{S,d}\psi_{S} \mu
    & \text{by \eqref{prop:psi-properties},}
  \\
  &= \nabla_{S,d}\mu\psi_R
    & \text{by \eqref{eq:psi-commute},}
  \\
  &= \beta\psi_R.
\end{align*}

Consider the ideal $\m_{d,S}= \m_d\lrtensor{s}{\O}{\iota}S \subseteq
\O_d\lrtensor{s}{\O}{\iota} S$.  
Since $s_d(\m)\subseteq \m_d$, it follows that
that
\[
(\psi_{\O_d}\otimes \psi_S)(x)\equiv x^{p^h} \mod \m_{d,S}.
\]

Consider the ideal $\m_S =
\O_d\lrtensor{s}{\O}{\iota} \m S\subseteq \O_d\lrtensor{s}{\O}{\iota}
S$.  Recall that $s\colon \O\ra \O_d$ presents $\O_d$ as a finitely
generated and free $\O$-module.  Since $S$ is complete with respect to
$\m$, it follows that $\O_d\lrtensor{s}{\O}{\iota} S$ is complete with
respect to $\m_S$.  

Since $s\colon \O\ra \O_d$ presents $\O_d$ as a finitely generated
free $\O$-module \eqref{prop:def-is-aff-gr-cat-scheme}, the quotient 
$\O_d/s(\m)$ is an artinian ring.  Thus we have that $s(\m)\subseteq
\m_d \subseteq s(\m)^N$ for 
some integer $N$, and thus $\m_S\subseteq \m_{d,S}\subseteq \m_S^N$.  
It follows that $\O_d\lrtensor{s}{\O}{\iota} S$ is also complete with
respect to $\m_{S,d}$.  

Thus, the objects $\iota_R\colon (\O,\m,\psi_O)\ra
(R,\m R,\psi_R)$ and $\nabla_{S,d}\iota_S\colon (\O,\m S,\psi_S)\ra
(\O_d\lrtensor{s}{\O}{\iota} S, \m_{d,S}, \psi_{\O_d}\otimes \psi_S)$
of $\mc{C}_{p^h}$ satisfies hypotheses of Frobenius lifting
\eqref{prop:frobenius-lifting}, which applies to show that
$\alpha=\beta$, as desired.
\end{proof}

\section{Proof of the Theorem}
\label{sec:proof-thm} 

We recall from \cite{rezk-congruence-condition} that there are 
forgetful functors
\[
\Alg_\T\ra \QCoh(\cDef)\ra \Alg_\O.
\]
These functors are monadic and comonadic
\cite{rezk-congruence-condition}*{4.22}, and so in particular preserve
small 
limits and colimits.  Furthermore, the image of
$\Alg_\T$ in $\QCoh(\cDef)$ is
contained in the full subcategory $\QCoh^{\Fr}(\cDef)$ of objects
which satisfy the Frobenius congruence.

\begin{rem}
The paper \cite{rezk-congruence-condition} actually gives fully
graded versions of these.  Here we extract only the degree 0 part of
the story.  Note that in that paper, the forgetful functor is
described by factoring through an equivalence $\Alg_{\Gamma}\approx
\QCoh(\cDef)$, where $\Alg_{\Gamma}$ describes algebras over an
explicit monad  which maps to $\T$, which we here call $\mc{Q}$.
\end{rem}

\begin{lemma}\label{lemma:t-alg-maps-tors-free}
  The map
  \[
  \Hom_{\Alg_\T}(R,S) \ra \Hom_{\QCoh(\cDef)}(R,S)
\]
induced by the forgetful functor $\Alg_\T\ra \QCoh(\cDef)$ is a
bijection whenever the underlying ring of $S$ is $p$-torsion free.  
\end{lemma}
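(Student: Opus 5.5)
Injectivity is immediate, since both $\Alg_\T\ra \QCoh(\cDef)$ and $\QCoh(\cDef)\ra\Alg_\O$ are faithful; their composite is the forgetful functor to rings. So the content of the lemma is surjectivity. Given a map $g\colon R\ra S$ of quasicoherent sheaves between $\T$-algebras, with $S$ $p$-torsion free, we must show $g$ commutes with the $\T$-algebra structure maps. That is, the two composites
\[
\T R \xra{\T g} \T S \ra S \qquad\text{and}\qquad \T R\ra R\xra{g} S
\]
should agree.

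The plan is to use the structure of $\T$ relative to the monad $\mc{Q}$ of \cite{rezk-congruence-condition}, whose algebras are $\QCoh(\cDef)$. The key input from that paper is the congruence criterion: $\T$ is obtained from $\mc{Q}$ by imposing the Frobenius congruence. More precisely, I will use that the natural map $\mc{Q}(M)\ra \T(M)$ is surjective, or at least becomes surjective after inverting $p$. On $p$-torsion free objects, a $\mc{Q}$-algebra structure satisfying the Frobenius congruence extends \emph{uniquely} to a $\T$-algebra structure. The extra operation is $\theta(x)=(Q(x)-x^p)/p$, which is well defined exactly because $S$ has no $p$-torsion.

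Concretely, every element of $\T R$ is then built, through the monad structure, from $\mc{Q}$-operations and the single additional operation $\theta$. Since $g$ is a map of quasicoherent sheaves, it commutes with all $\mc{Q}$-operations, including $Q$ (the $d=1$ component of $\nabla$ evaluated appropriately). For $\theta$ we get
\[
p\,g(\theta x)=g(Q x - x^p)=Q(gx)-(gx)^p = p\,\theta(gx),
\]
and cancelling $p$ in $S$ gives $g\theta=\theta g$. Induction on the filtration of $\T R$ by how many times $\theta$ appears then shows the two composites agree.

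An alternative, cleaner route avoids element-level arguments. The two composites $\T R\ra S$ are $\O$-algebra maps whose underlying maps agree after composing with $\mc{Q}R\ra \T R$. So it suffices that $\mc{Q}R\ra\T R$ is an epimorphism onto targets that are $p$-torsion free. This reduces to the statement that its cokernel, or the failure of surjectivity, is $p$-power torsion: after tensoring with $\Q$, the map $\mc{Q}\otimes\Q\ra \T\otimes\Q$ is an isomorphism. I expect this rational comparison to be the main obstacle. It must come from \cite{rezk-congruence-condition}, where $\T$ is identified with $\mc{Q}$ modulo the congruence, so I would cite the precise theorem there (the congruence criterion characterizing $\T$-algebras among $p$-torsion free $\mc{Q}$-algebras) rather than reprove it.
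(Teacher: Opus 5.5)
Your second, ``cleaner'' route is essentially the paper's proof. Both forgetful functors to $\O$-algebras are monadic. The two composites $\T R\to S$ agree after precomposing with $\mc{Q}R\to \T R$, because $g$ is a $\mc{Q}$-algebra map. The map $\mc{Q}\otimes\Q\to\T\otimes\Q$ is an isomorphism; the paper cites 7.4 and 8.4 of the congruence-criterion paper for exactly this. Hence the cokernel of $\mc{Q}R\to\T R$ is $p$-power torsion, since primes other than $p$ are already units in $\O$. So $p$-torsion freeness of $S$ forces the two $\O$-linear composites to agree.

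You should drop the first route rather than present it as the plan. The integral map $\mc{Q}(M)\to\T(M)$ is not surjective. Already at height $1$, the weight-$p$ part of the image is spanned by $x^p$ and $x^p+p\theta x$, so $\theta x$ is missed. Your claim that every element of $\T R$ is built from $\mc{Q}$-operations and $\theta$ does not follow from the congruence criterion as you state it. At height $h\geq 2$ it amounts to saying free $\T$-algebras are generated by $\Gamma$-operations and $\theta$, which would need its own proof. In any case the rational isomorphism makes the $\theta$-computation unnecessary.
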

\begin{proof}
  This is implicit (but apparently not made explicit) in
  \cite{rezk-congruence-condition}.  Both forgetful functors
  $\Alg_\T\ra \CAlg_\O$ and $\QCoh(\cDef)\ra \CAlg_\O$ are monadic, and
  the induced map $\mc{Q}\ra \T$ on monads becomes an isomorphism
  after tensoring with $\Q$.  See
  \cite{rezk-congruence-condition}*{7.4, 8.4}.  (That $\QCoh(\cDef)\ra
  \CAlg_O$ is monadic follows from
  \cite{rezk-congruence-condition}*{Thm.\ B}. 
\end{proof}

Any object $R$ of $\Alg_\T$ determines an underlying quasicoherent
sheaf $(R,\iota_R,\nabla_{R,d})$, and therefore an underlying object
$(R,\m R, \psi_R)$ in $\mc{C}_{p^h}$.

\begin{thm}\label{thm:t-alg-to-complete-and-perfect}
  Let $R$ and $S$ be objects of $\Alg_\T$, and suppose that
  \begin{enumerate}
  \item the underlying ring of $S$ is $p$-torsion free, and
  \item $(S,\m R, \psi_S)$ is complete and perfect in $\mc{C}_{p^h}$.
  \end{enumerate}
  Then the forgetful functor $\Alg_\T\ra \Alg_\O$ and the projection
  $\O\ra \kappa$ induce a bijection
  \[
  \Hom_{\Alg_\T}(R, S) \xra{\sim} \Hom_{\Alg_\O}(R, S/\m S).
  \]
\end{thm}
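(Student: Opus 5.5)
We factor the map as a composite of bijections. By Lemma~\ref{lemma:t-alg-maps-tors-free}, since $S$ is $p$-torsion free, the forgetful functor gives a bijection
\[
\Hom_{\Alg_\T}(R,S)\xra{\sim}\Hom_{\QCoh(\cDef)}(R,S).
\]
It therefore suffices to show that reduction modulo $\m S$ gives a bijection
\[
\Hom_{\QCoh(\cDef)}(R,S)\xra{\sim}\Hom_{\Alg_\O}(R,S/\m S).
\]
Both $R$ and $S$ come from $\Alg_\T$, so their underlying sheaves lie in $\QCoh^{\Fr}(\cDef)$ and satisfy the Frobenius congruence. By hypothesis, $(S,\m S,\psi_S)$ is complete and perfect. (We read the hypothesis ``$(S,\m R,\psi_S)$'' in the statement as $(S,\m S,\psi_S)$.) Theorem~\ref{thm:lifting-in-qcoh} therefore applies.

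\emph{Surjectivity.} Let $\bar\mu\colon R\ra S/\m S$ be an $\O$-algebra map. We first lift it to an $\O$-algebra map $\mu\colon R\ra S$. Once we have such a $\mu$, Theorem~\ref{thm:lifting-in-qcoh} produces a map of quasicoherent sheaves $\wh\mu$ with $\wh\mu\equiv\mu$ modulo $\m S$, so $\wh\mu$ reduces to $\bar\mu$.

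Producing $\mu$ is the main obstacle, since an arbitrary $\O$-algebra $R$ need not be liftable against $S\ra S/\m S$. We handle it as the introduction suggests, by reducing to free objects. Let $F=\T(\O^{\oplus X})$ be a free $\T$-algebra; its underlying $\O$-algebra is a polynomial ring, so lifts from $S/\m S$ to $S$ always exist for $F$. Write $R$ as a reflexive coequalizer $F_1\rightrightarrows F_0\ra R$ of free $\T$-algebras; this presentation is preserved by the forgetful functor to $\Alg_\O$, since that functor preserves colimits. Both $\Hom_{\Alg_\T}(-,S)$ and $\Hom_{\Alg_\O}(-,S/\m S)$ send this coequalizer to an equalizer. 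The bijection for $F_0$ and $F_1$ then gives it for $R$ by the five-lemma-type argument for equalizers of sets. For free $F$, surjectivity follows from the lifting argument above, and injectivity from the next paragraph.

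\emph{Injectivity.} Suppose two sheaf maps $\wh\mu_1,\wh\mu_2\colon R\ra S$ agree modulo $\m S$. Apply Theorem~\ref{thm:lifting-in-qcoh} with $\mu=\wh\mu_1$: it gives uniqueness of a sheaf map congruent to $\mu$, and $\wh\mu_2$ is such a map, so $\wh\mu_1=\wh\mu_2$. Equivalently, both maps commute with the Adams operations, so the uniqueness in Proposition~\ref{prop:frobenius-lifting} already forces them to coincide. This uniqueness argument needs no lift and so works for general $R$. Hence it would suffice to reduce only the existence part to free $R$, and the coequalizer argument covers that.
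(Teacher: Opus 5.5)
Your proposal is correct and follows essentially the same route as the paper. Both reduce via coequalizers of free $\T$-algebras, whose underlying $\O$-algebras are polynomial so that $\bar\mu$ lifts to $\mu$, then apply Theorem~\ref{thm:lifting-in-qcoh} and transfer the result to $\Alg_\T$ by Lemma~\ref{lemma:t-alg-maps-tors-free}; your extra remarks that uniqueness needs no reduction for general $R$ and that $\m R$ should read $\m S$ are both right.
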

\begin{proof}
  Fix $S$, and let $\mc{X}\subseteq \Alg_\T$ be the collection of
  $\T$-algebras $R$ 
  such that the conclusion holds.  Since $\Alg_\T\ra \Alg_\O$
  preserves colimits, $\mc{X}$ is stable under colimits.  Every
  $\T$-algebra is a coequalizer of free $\T$-algebras, and free
  $\T$-algebras are polynomial rings
  \cite{strickland-morava-e-theory-of-symmetric}*{\S5}.  Thus, we can reduce
  to 
  the case that the underlying $\O$-algebra of $R$ is polynomial. 

  In this case, every $\O$-algebra map $\ol\mu\colon R\ra
  S/\m S$ lifts to an $\O$-algebra map $\mu\colon R\ra S$.  By
  \eqref{thm:lifting-in-qcoh}, there exists a unique morphism
  $\wh{\mu}\colon R\ra S$ in 
  $\QCoh(\cDef)$ such that $\wh{\mu}\equiv \mu\mod \m S$.  By
  \eqref{lemma:t-alg-maps-tors-free}
  this $\wh{\mu}$ is the unique morphism in $\Alg_\T$ with this
  property.  
\end{proof}

\begin{proof}[Proof of \eqref{thm:cofreeness-of-O}]
  Immediate from \eqref{thm:t-alg-to-complete-and-perfect}, as the
  $\T$-algebra $\pi_0E = \O$ is torsion free, complete and perfect
  \eqref{exam:adams-op-on-unit}. 

\end{proof}

Let $E\colon \Perf_\kappa\ra \CAlg(\Sp_{K(h)})$ denote the functorial
extension of Morava $E$-theory described in
\cite{burklund-schlank-yuan-chromatic-nullstellensatz}*{\S2}.  We write
$\O(A)=\pi_0E(A)$.

\begin{proof}[Proof of \eqref{thm:cofreeness-of-OA}]
  It suffices to show that $\O(A)$ is $p$-torsion free,
  complete, and perfect.   We have
  \cite{burklund-schlank-yuan-chromatic-nullstellensatz}*{Thm.\ 2.38}
  \[
  \pi_0E(A) \approx \Witt_pA \powser{u_1,\dots, u_{h-1}}, \qquad \m
  \pi_0E(A) = (p,u_1,\dots,u_{h-1}),
\]
which shows that $\O(A)$ is $p$-torsion free and complete with
respect to $\m$, and that $\O(A)/\m \O(A)\approx A$.
This also provides an isomorphism of rings
\[
\bigl[ \Witt_p(A)\otimes_{\Witt_p(\kappa)} \O\bigr]^{\eev}_p \xra{\sim} \O(A).
\]
It remains to show that the Adams operation $\psi_{\O(A)}$ (induced by
the natural 
$\T$-algebra structure on $\pi_0E(A)$) is an isomorphism which
preserves $\m \O(A)$.  This is immediate from the following claim.

\textit{Claim.}  $\psi_{\O(A)}$ is the $p$-completion of
$\wt\phi^h\otimes \psi_\O$, where $\wt\phi\colon \Witt_p(A)\ra
\Witt_p(A)$ is the lift of $p$th power Frobenius.

\textit{Proof of Claim.}
By naturality, we know that the action of $\psi_{O(A)}$ is compatible
with that of $\psi_\O$ under the $\O$-algebra structure map $\O\ra
\O(A)$.  Thus, it suffices to compute the composite of $\Witt_pA\ra
\O(A)\xra{\psi_{\O(A)}} \O(A)$.  By
  \cite{burklund-schlank-yuan-chromatic-nullstellensatz}*{Lemma
    3.27}, there is an equivalence
  \[
  \Witt_\Sphere(A) \otimes_{\Witt_\Sphere(\kappa)} E \xra{\approx} E
  \]
in $\CAlg(\Sp_{K(h)})$, where $\Witt_\Sphere(A)$ denotes the
\emph{spherical Witt vectors} \cite{lurie-ell2}
\cite{burklund-schlank-yuan-chromatic-nullstellensatz}*{\S2.1}.
The spherical Witt vectors are an object of $\CAlg(\Sp_p)_{\geq0}$,
whose underlying spectrum is a Moore spectrum of the abelian group
$\Witt_pA$, so $\pi_0\Witt_\Sphere(A)=\Witt_p(A)$.

We have a commutative diagram 
\[\xymatrix{
  {\pi_0 \Witt_\Sphere A} \ar[r]^-{P^{p^h}} \ar[d]
  & {\pi_0\ul\Map(\Sigma^\infty_+ B\Sigma_{p^h}, \Witt_\Sphere A)} \ar[d]
  \\
  {\pi_0 E(A)}  \ar[r]_-{P^{p^h}}
  & {\pi_0\ul\Map(\Sigma^\infty_+, B\Sigma_{p^h}, E(A))} \ar[r]
  & {\O_h\otimes_\O \O(A)} \ar[r]_-{\nu_h\otimes\id}
  & {\O(A)} 
}\]
where $P^m\colon \pi_0 R \ra \pi_0\ul\Map(\Sigma^\infty_+ B\Sigma_m,
R)$ denotes the $m$th power operation of an $R\in \CAlg(\Sp)$, and the
other maps are ring homomorphisms.  
The Adams operation $\psi_{\O(A)}$ is the composite of  the bottom
row. 
Since the spherical Witt vectors
are connective, for any $m$ we have
$\pi_0\ul\Map(\Sigma^\infty_+B\Sigma_m, \Witt_\Sphere A) =
\pi_0\Witt_\Sphere A$ and  $P^m(c)=c^m$.
\end{proof}


\end{document}